\makeatletter \@namedef{subjclassname@2010}{%
  \textup{2010} Mathematics Subject Classification}
\newcounter{thm} \numberwithin{thm}{section}
\newtheorem{Theorem}[thm]{Theorem}
\newtheorem{Lemma}[thm]{Lemma}
\newtheorem*{Conjecture}{Conjecture}
\newtheorem*{Definition}{Definition}
\newtheorem*{UnnumberedTheorem}{Theorem}
\newcommand{\eps}[0]{\varepsilon}
\renewcommand{\mod}[1]{\ (\text{mod }#1)}
\newcommand{\lr}[1]{\left(#1\right)}
\begin{document}

\title{Long regularly-spaced and convex sequences in dense sets of integers}

\author[Brandon Hanson]{Brandon Hanson} \address{University of Georgia\\
Athens, GA}
\email{brandon.w.hanson@gmail.com}
\date{}
\maketitle
\begin{abstract}
    Let $A$ be a set of integers which is dense in a finite interval. We establish upper and lower bounds for the longest regularly-spaced and convex sequences in $A$ and in $A-A$. 
\end{abstract}
\section{Introduction}
One of the fundamental problems in additive combinatorics is that of finding structured components in a set $A$, when $A$ is a subset of an abelian group satisfying some combinatorial conditions. For instance, Szemer\'edi's celebrated theorem states:
\begin{UnnumberedTheorem}[Szemer\'edi]
Let $\delta\in(0,1]$ and let $k\geq 3$ be an integer. Then if $N$ is sufficiently large in terms of $\delta$ and $k$, any subset $A$ of $[N]$ with $|A|\geq \delta N$ contains a non-trivial $k$-term arithmetic progression.\footnote{Here and throughout the article, we use the notation $[N]=\{1,\ldots,N\}$ whenever $N$ is a positive integer.}
\end{UnnumberedTheorem}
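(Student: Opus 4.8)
The plan is to follow the \emph{density-increment strategy} that originates with Roth's treatment of the case $k=3$ and was extended to all $k$ by Gowers. Suppose, for contradiction, that $A\subseteq[N]$ with $|A|\geq\delta N$ contains no non-trivial $k$-term arithmetic progression. I would aim to show that $A$ must then correlate with a sub-progression $P\subseteq[N]$ of length $|P|\geq N^{c}$ on which its relative density exceeds $\delta+c(\delta)$ for some $c(\delta)>0$ depending only on $\delta$ and $k$. Rescaling $P$ to an interval and iterating, the density rises past $1$ after $O(1/c(\delta))$ steps, which is absurd once $N$ was taken large enough in terms of $\delta$ and $k$; this contradiction proves the theorem.

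The analytic heart of the argument is that the normalised count of $k$-term progressions in $A$ is controlled by the \emph{Gowers $U^{k-1}$-norm} of the balanced function $f=1_A-\delta$. First I would prove the \emph{generalised von Neumann inequality}: if $\|f\|_{U^{k-1}}$ is smaller than a suitable power of $\delta$, then $A$ contains at least $\tfrac12\delta^{k}N^{2}$ progressions of length $k$, hence a non-trivial one. So the assumed absence of progressions forces $\|f\|_{U^{k-1}}\gg_{\delta,k}1$, and everything reduces to extracting structure from a large Gowers norm.

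That extraction is the main obstacle, and it is precisely why $k\geq4$ is genuinely harder than Roth's theorem. For $k=3$ the $U^{2}$-norm is an $\ell^{4}$-norm of the Fourier transform, so a lower bound yields a single large Fourier coefficient, and partitioning $[N]$ into progressions on which the corresponding character is nearly constant gives the density increment by pigeonhole. For $k\geq4$ linear Fourier analysis no longer suffices: one must invoke the \emph{inverse theorem for the Gowers norms}, which says that $\|f\|_{U^{k-1}}\gg1$ implies correlation of $f$ with a bounded-complexity $(k-2)$-step nilsequence, equivalently with a polynomial phase $e(p(n))$ of degree $k-2$ on each of a bounded number of long sub-progressions. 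Covering $[N]$ by progressions on which this phase is essentially constant then delivers the required increment. Packaging the generalised von Neumann inequality, the inverse theorem, and the density-increment iteration completes the proof; the quantitative dependence of $N$ on $\delta$ and $k$ is governed by the quantitative form of the inverse theorem, which is the true bottleneck. (One may instead run Furstenberg's ergodic-theoretic route through multiple recurrence, or the hypergraph removal lemma, but each ultimately confronts an equivalent structural difficulty.)
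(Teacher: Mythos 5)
The paper never proves this statement: it is Szemer\'edi's theorem, quoted as classical background and attributed to \cite{Szemeredi}, so there is no internal argument to compare yours against. Judged on its own terms, what you have written is an accurate high-level description of the Gowers density-increment proof, but it is an outline rather than a proof: the two ingredients that carry all of the content --- the generalised von Neumann inequality and, far more seriously, the inverse (or local inverse) theorem for the $U^{k-1}$ norm --- are simply invoked. The von Neumann step is a routine Cauchy--Schwarz argument one could reasonably leave as an exercise, but the inverse theorem is the entire difficulty of the subject, occupying Gowers' long paper (and, in the nilsequence formulation you state, Green--Tao--Ziegler); asserting it does not discharge it, so as a proof of Szemer\'edi's theorem the proposal has a genuine gap at its centre.

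Two technical points in the sketch also need care. First, Gowers' argument does not use the global inverse theorem with $(k-2)$-step nilsequences; it uses a weaker \emph{local} statement, producing correlation with a polynomial phase of degree $k-2$ only on arithmetic progressions of length a small power of $N$, and conversely, if one does start from nilsequence correlation, extracting a density increment on a long progression requires an additional factorisation/equidistribution argument on nilmanifolds that you have not indicated. Second, the increment iteration needs slightly more bookkeeping than ``density rises past $1$ after $O(1/c(\delta))$ steps'': each step passes to a progression whose length is only a power of the previous one, so you must track that the length stays large enough for the increment lemma to apply at every stage; this is where the tower/double-exponential dependence of $N$ on $\delta$ and $k$ enters, and it is exactly the quantitative dependence the paper is commenting on when it cites the theorem.
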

Szemer\'edi's theorem now has several proofs, but none of them are particularly easy. Furthermore, strong quantitative dependencies on the parameters $\delta$ and $k$ are not known, and certainly the state of the art is far from what has been conjectured:
\begin{Conjecture}[Erd\H{o}s-Tur\'an]\label{ErdosTuran}
Let $A$ be an infinite sequence of integers such that
\[\sum_{a\in A}\frac{1}{a}=\infty.\]
Then $A$ contains a non-trivial $k$-term arithmetic progression for every $k\geq 1$.
\end{Conjecture}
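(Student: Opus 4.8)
The final displayed statement is the Erd\H{o}s--Tur\'an conjecture itself, so let me be candid: I do not expect to prove it, no proof is known for any $k\geq 4$, and even the case $k=3$ is a deep recent theorem. What I can honestly offer is the standard reduction to a quantitative Szemer\'edi bound, a note on the one accessible case, and a description of where the argument stalls. Szemer\'edi's theorem as quoted above is purely qualitative and does not by itself yield the conjecture; what is needed is a rate. Let $r_k(N)$ be the largest size of a $k$-term-progression-free subset of $[N]$, suppose $A$ is $k$-term-progression-free, and put $S(t)=|A\cap[t]|\leq r_k(t)$. Abel summation gives
\[
\sum_{\substack{a\in A\\ a\leq N}}\frac{1}{a}=\frac{S(N)}{N}+\int_1^N\frac{S(t)}{t^2}\,dt\ll 1+\int_2^N\frac{r_k(t)}{t^2}\,dt,
\]
and the right side is bounded uniformly in $N$ exactly when $r_k(t)/t$ is summable against $dt/t$, for instance when $r_k(t)\ll t(\log t)^{-1-c}$ for a fixed $c>0$. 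So a density bound just past the ``logarithmic barrier'' $1/\log N$ immediately implies the conjecture for that $k$, and the entire difficulty is to supply such a bound.

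For $k=3$ the required input exists. The bounds $r_3(N)\ll N(\log\log N)^{O(1)}/\log N$ of Sanders and of Bloom are, by the computation above, just barely insufficient, since $\int(\log\log t)^{O(1)}\,dt/(t\log t)$ still diverges; but the Bloom--Sisask theorem $r_3(N)\ll N/(\log N)^{1+c}$ crosses the threshold, so the $k=3$ instance of the conjecture is a theorem (and the Kelley--Meka bound $r_3(N)\ll N\exp(-c(\log N)^{\beta})$ suffices with room to spare). For $k\geq 4$, however, the best known bounds for $r_k(N)$ --- from Gowers' proof of Szemer\'edi's theorem and the subsequent quasi-polynomial inverse theory for the Gowers norms --- fall far short of $1/\log N$: for $k=4$ one has only a polylogarithmic saving $r_4(N)\ll N(\log N)^{-c}$ with $c<1$, so the integral above still diverges, and for $k\geq 5$ the bounds are weaker still.

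The main obstacle is therefore precisely the one the whole field is stuck on: obtaining $(\log N)^{-1-c}$-quality (let alone Behrend-quality) bounds for progression-free sets when $k\geq 4$. I see no way around it. It should be stressed that the present paper does not attempt the conjecture --- it is quoted only as motivation --- and the theorems to follow instead extract weaker-but-still-rigid configurations, namely long regularly-spaced and convex sequences in $A$ and in $A-A$, whose extremal behaviour, though delicate, does not collide with the Szemer\'edi barrier.
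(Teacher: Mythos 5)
You were asked about a statement that the paper itself presents only as the Erd\H{o}s--Tur\'an conjecture, quoted for motivation; the paper contains no proof of it, so there is nothing to compare your argument against, and your refusal to claim a proof is exactly right. The partial-summation reduction you give is correct: if $A$ is $k$-AP-free and $S(t)=|A\cap[t]|\leq r_k(t)$, then $\sum_{a\in A,\,a\leq N}a^{-1}=S(N)/N+\int_1^N S(t)t^{-2}\,dt$, so any bound of the shape $r_k(t)\ll t(\log t)^{-1-c}$ forces convergence of the reciprocal sum and hence yields the conjecture for that $k$; your accounting of the known bounds (Bloom--Sisask and Kelley--Meka crossing the threshold for $k=3$, nothing close to it for $k\geq 4$) is accurate. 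In short, the proposal correctly identifies the statement as open beyond $k=3$, supplies the standard conditional reduction, and does not misrepresent what is known; no gap to flag beyond the obvious one that the conjecture itself remains unproved, which is not a defect of your write-up.
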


With the goal of improving quantitative dependencies, we will investigate what happens if one relaxes the notion of arithmetic progression. To that end we make the following definition.
\begin{Definition}
Let $a_1<\ldots<a_n$ be an increasing sequence of real numbers and let $L\geq 1$ be a parameter. We say the sequence is $L$-regular if there is a positive real number $X$ such that \[X\leq a_{i+1}-a_i\leq LX\]
holds for $i=1\ldots, N-1$.
\end{Definition}

We remark that a sequence $a_1<\ldots<a_N$ forms a non-trivial arithmetic progression if
\[\frac{a_{m+1}-a_m}{a_{n+1}-a_n}=1\]
for any appropriate choice of $m,n$, while the sequence being $L$-regular is equivalent to the bi-Lipschitz condition
\[\frac{1}{L}\leq \frac{a_{m+1}-a_m}{a_{n+1}-a_n}\leq L\]
for any appropriate choice of $m,n$. 

Our first basic question is whether or not an additively structured set must contain a long $L$-regular sequence.\footnote{Since many theorems in additive combinatorics are stated in terms of a set rather than a sequence, we will repeatedly blur the distinction between a finite set of real numbers and a strictly increasing finite sequence of real numbers.} If $A$ is a set of real numbers, we define
\[R_L(A)=\max\{|A'|:A'\subseteq A,\ A'\text{ is }L\text{-regular}\}.\]
The purpose of this article is to prove estimates $R_L(A)$ under various hypotheses on $A$, a problem which can be interpreted as one rooted in Ramsey theory. However, regularly-spaced sequences are also a stepping stone to other types of structure. Indeed, results about $L$-regular sequences in dense sets were investigated previously in \cite{FraserYu}, motivated by questions of dimension in geometric measure theory. We will recall their result for context later in the article. Our principle motivation for studying regularly-spaced sequences is their connection to convex sequences. A sequence $a_1<\ldots<a_N$ is said to be (strictly) convex if the sequence of first differences $a_{i+1}-a_i$ is (strictly) monotone. The relationship between additive structure and convexity has been the subject of much investigation, and some results of various flavours linking the two can be found in \cite{RuzsaZhelezov}, \cite{SchoenShkredov}, and \cite{HRNR}. The upcoming Theorem \ref{RegularToConvex} shows that long $L$-regular sequences contain long convex subsequences. Thus, by finding long $L$-regular sequences in dense sets of integers (and other sets with similar structure), we also find rather long convex sequences in those sets. This is a result which bears some resemblance to the Erd\H{o}s-Szekeres theorem, which states that any real-sequence of length $N$ contains a monotone subsequence of length $\sqrt N$. From this perspective, Theorem \ref{RegularToConvex} can be interpreted as a strict version of the Erd\H{o}s-Szekeres theorem for first differences. A discussion of this problem is found at \cite{MathOverflow}.

We now state the main results of this paper. Estimates concerning $R_L(A)$ are most interesting when $L$ is reasonably small. We will focus on the case $L=2$, but the following lemma (proved in Section \ref{RegularityAndCoveringSection}) shows that case of smaller values of $L$ can be reduced to $L=2$ at a modest quantitative expense.
\begin{Lemma}\label{SmallerL}
If $A$ is $2$-regular and $l\geq 2$ is an integer, then $A$ has a $\lr{1+\frac{1}{l}}$-regular subset $A'$ with
\[|A'|\geq \frac{|A|}{4l+2}.\]
\end{Lemma}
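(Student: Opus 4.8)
\emph{Proof proposal.} The plan is to extract $A'$ by a single greedy sweep through $A$, grouping consecutive elements into blocks whose total length is pinned down up to an additive error of one original gap, and then to calibrate the block-length threshold so that this additive error becomes exactly the multiplicative tolerance $1+\tfrac{1}{l}$.

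Write $A=\{a_1<a_2<\dots<a_n\}$ and fix $X>0$ with $X\le a_{i+1}-a_i\le 2X$ for all $i$, which exists since $A$ is $2$-regular; the lemma is trivial for $n\le 1$, so assume $n\ge 2$. Set the threshold $Y=2lX$. Define $i_1<i_2<\dots<i_m$ greedily: $i_1=1$, and having chosen $i_k$, let $i_{k+1}$ be the least index with $a_{i_{k+1}}-a_{i_k}\ge Y$, stopping when no such index remains; put $A'=\{a_{i_1},\dots,a_{i_m}\}$. Since $Y=2lX>2X\ge a_{i_k+1}-a_{i_k}$, we have $i_{k+1}\ge i_k+2$, so no degenerate block occurs and every partial sum used below is nonempty.

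First I would verify the regularity. For $1\le k\le m-1$, minimality of $i_{k+1}$ gives $a_{i_{k+1}-1}-a_{i_k}<Y$, and adjoining the single gap $a_{i_{k+1}}-a_{i_{k+1}-1}\le 2X$ yields
\[
Y\le a_{i_{k+1}}-a_{i_k}<Y+2X=\lr{1+\tfrac{1}{l}}Y.
\]
Hence, taking $X'=Y=2lX$, all consecutive differences of $A'$ lie in $[X',(1+\tfrac{1}{l})X']$, so $A'$ is $(1+\tfrac{1}{l})$-regular. Next I would bound $m=|A'|$ from below. Each complete block $[a_{i_k},a_{i_{k+1}}]$ has length $<(1+\tfrac{1}{l})Y=(2l+2)X$ and is a sum of $i_{k+1}-i_k$ gaps each of size $\ge X$, so it uses at most $2l+1$ of the original gaps; the leftover tail satisfies $a_n-a_{i_m}<Y=2lX$, using at most $2l-1$ gaps. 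Summing the gap counts gives $n-1\le (m-1)(2l+1)+(2l-1)$, which rearranges to $m\ge (n+1)/(2l+1)\ge n/(4l+2)$, as required.

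The argument is a clean greedy/pigeonhole with no deep obstacle; the only points demanding care are the two-sided estimate on block sums — the upper bound is the crux and relies precisely on stopping at the \emph{first} index past the threshold, so that the overshoot is a single gap of size $\le 2X$ — and the elementary but fiddly block-counting that produces the constant $4l+2$. A threshold smaller than $2lX$ would destroy the multiplicative tolerance, and a larger one would worsen the density bound, so $Y=2lX$ is the natural calibration.
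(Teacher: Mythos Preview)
Your proof is correct. The approach differs from the paper's: the paper $2$-covers $A$ by consecutive intervals of length $2X$ (each meeting $A$ in one or two points) and then selects one element from every $(2l+1)$-st interval, so that consecutive selected elements lie between $(q-1)2X$ and $(q+1)2X$ apart with $q=2l+1$, giving the ratio $(q+1)/(q-1)=1+1/l$ and the count $|A|/(2q)=|A|/(4l+2)$. Your greedy threshold at $Y=2lX$ achieves the same regularity bound by a direct sweep through the sequence rather than through an auxiliary interval partition, and in fact your gap-counting yields the sharper estimate $|A'|\ge (|A|+1)/(2l+1)$, roughly twice the stated bound, before you discard the factor of two to match the lemma. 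Both arguments are equally elementary; the paper's version ties into its general $M$-covering framework (Lemma~\ref{Covering}), while yours is self-contained and gives a marginally better constant.
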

The first result concerning $R_2(A)$ is not original, but one which can be extracted from \cite{FraserYu} and serves as a reference point for our estimates. It is a quantitatively strong $L$-regular analogue of Szemer\'edi's theorem. 
\begin{UnnumberedTheorem}[Fraser-Yu]\label{FY}
Let $\eps>0$ be a fixed constant. Let $A\subseteq[N]$ be a set of size $\delta N$. Then \[R_2(A)\gg_\eps \frac{(\log N)^{1-\eps}}{\log(1/\delta)}.\]
\end{UnnumberedTheorem}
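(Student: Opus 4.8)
The plan is to show that $R_2(A)$ is controlled by the depth of the deepest \emph{full binary occupied subtree} of the dyadic tree on $[N]$, and then to bound that depth from below by a counting argument fed by the density of $A$. We may assume $\delta\le\tfrac12$ and $\log(1/\delta)\le\tfrac12(\log N)^{1-\eps}$, since in the remaining cases $R_2(A)$ already exceeds the asserted bound trivially. The elementary starting point is: \emph{if $I$ is an interval and $Y>0$ is such that every subinterval of $I$ of length $Y$ meets $A$, then $A\cap I$ contains a $2$-regular sequence of length $\gg|I|/Y$}. Indeed, put $b_1=\min(A\cap I)$, and having chosen $b_t$ let $b_{t+1}$ be the least element of $A\cap I$ with $b_{t+1}\ge b_t+2Y$: the hypothesis rules out a point-free window of length $Y$ inside $I$, forcing $b_t<b_{t+1}<b_t+3Y$, so that $2Y\le b_{t+1}-b_t\le 2\cdot 2Y$ and the $b_t$ form a $2$-regular sequence; the process cannot halt until $b_t$ lies within $3Y$ of $\max I$, so there are $\gg|I|/Y$ of them. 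It therefore suffices to find an interval $I\subseteq[N]$ and a scale $Y$ with $|I|/Y\gg_{\eps}(\log N)^{1-\eps}/\log(1/\delta)$ on which $A$ has no gap exceeding $Y$.

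To produce such a configuration, assume $N=2^m$ (otherwise embed $[N]$ into $[2^{\lceil\log_2N\rceil}]$, costing a factor at most $2$ in $\delta$), and for $0\le k\le m$ let $T_k$ be the number of dyadic intervals of length $2^{m-k}$ meeting $A$, so $T_0=1$, $T_m=|A|=\delta N$, and $T_k\le T_{k+1}\le 2T_k$. Fix a parameter $\ell\ge 1$ and distinguish two cases. If some dyadic interval $J$ of length $2^{m-k}$ has \emph{all} $2^{\ell}$ of its dyadic subintervals of length $2^{m-k-\ell}$ occupied, then every subinterval of $J$ of length $2^{m-k-\ell+1}$ contains an aligned dyadic subinterval of length $2^{m-k-\ell}$ and hence meets $A$; applying the first step to $J$ with $Y=2^{m-k-\ell+1}$ yields a $2$-regular sequence of length $\gg 2^{\ell}$. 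Otherwise no such $J$ exists, so every occupied interval at level $k$ has at most $2^{\ell}-1$ occupied descendants at level $k+\ell$, giving $T_{k+\ell}\le(2^{\ell}-1)T_k$; chaining this along $k=0,\ell,2\ell,\dots$ and absorbing the at most $\ell$ leftover levels via $T_{k+1}\le 2T_k$ gives
\[
\delta N=T_m\le 2^{\ell}\,(2^{\ell}-1)^{\lfloor m/\ell\rfloor}\le 2^{\ell}\cdot 2^{m}\,(1-2^{-\ell})^{\lfloor m/\ell\rfloor},
\]
whence $\log(1/\delta)\gg m\,2^{-\ell}/\ell$, and so $\ell 2^{\ell}\gg(\log N)/\log(1/\delta)$ (the lower-order corrections being harmless for the values of $\ell$ used below).

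Finally, read this contrapositively: for a suitable absolute constant $c>0$, if $\ell 2^{\ell}<c\,(\log N)/\log(1/\delta)$ then the second case cannot occur, and so $R_2(A)\gg 2^{\ell}$. Taking $\ell=\bigl\lfloor\log_2\bigl((\log N)^{1-\eps}/\log(1/\delta)\bigr)\bigr\rfloor\ge 1$ we obtain $2^{\ell}\gg(\log N)^{1-\eps}/\log(1/\delta)$, while $\ell 2^{\ell}\ll(\log N)^{1-\eps}\log\log N=o(\log N)$, so the inequality $\ell 2^{\ell}<c\,(\log N)/\log(1/\delta)$ indeed holds once $N$ is large in terms of $\eps$; this gives $R_2(A)\gg_{\eps}(\log N)^{1-\eps}/\log(1/\delta)$, as required. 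I do not anticipate a serious obstacle: the one genuine idea is the recognition that $R_2$ is governed by the depth of a full binary occupied subtree, after which everything reduces to the transparent counting above. The only points needing care are that the per-level saving be honestly $2^{\ell}-1$ rather than $2^{\ell}$ — which is exactly what the failure of the first case provides — and the routine tracking of lower-order terms in the optimization of $\ell$. The argument in fact delivers somewhat more than is claimed, with a $(\log\log N)^{O(1)}$ factor in place of $(\log N)^{\eps}$.
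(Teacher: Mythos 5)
The paper does not prove this statement at all: it is quoted as background and attributed to Fraser and Yu, with the remark that it ``can be extracted from \cite{FraserYu}''. So there is no in-paper proof to compare with, and your argument should be judged on its own; it is, in substance, correct and gives a self-contained elementary proof. Your ``elementary starting point'' is exactly the covering principle of Lemma~\ref{Covering} (pick one element from every third occupied interval of a covering), and your genuinely new ingredient is the dyadic dichotomy: either some dyadic interval has all $2^{\ell}$ of its depth-$\ell$ descendants occupied, in which case the covering principle produces a $2$-regular sequence of length $\gg 2^{\ell}$, or every level-$k$ to level-$(k+\ell)$ passage loses a factor $2^{\ell}-1$ rather than $2^{\ell}$, and chaining forces $\delta\le 2^{\ell}(1-2^{-\ell})^{\lfloor m/\ell\rfloor}$, hence $\ell 2^{\ell}\gtrsim \log N/\log(1/\delta)$ once the lower-order term $\ell^{2}2^{\ell}$ is negligible --- which it is for your choice of $\ell$. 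This replaces the weak-tangent/dimension framework of \cite{FraserYu} by a direct finitary count, and it buys a slightly stronger bound (a power of $\log\log N$ in place of $(\log N)^{\eps}$), which sits consistently just below the upper bound $16(\log N/\log\log N)^{2}$ exhibited in Theorem~\ref{DensityConstruction}.

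Two blemishes, neither fatal. First, your opening claim that for $\delta>1/2$ the bound holds ``trivially'' is not right: as $\delta\to 1$ the quoted right-hand side blows up (for $A=[N]\setminus\{1\}$ it is about $N(\log N)^{1-\eps}$, exceeding the trivial bound $R_2(A)\le N$), so the statement must anyway be read with $\delta$ bounded away from $1$ (or with $\log(1/\delta)$ replaced by something like $\log(2/\delta)$); your proof covers the intended regime $\delta\le 1/2$, but you should say that this regime is what the theorem means, not that the rest is trivial. Second, in the final verification that $\ell 2^{\ell}<c\,(\log N)/\log(1/\delta)$, you discard the factor $1/\log(1/\delta)$ from $2^{\ell}$ and argue via $\ell 2^{\ell}=o(\log N)$; that comparison only suffices when $\log(1/\delta)=O(1)$. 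Keep $2^{\ell}\le (\log N)^{1-\eps}/\log(1/\delta)$ throughout: then the needed inequality reduces to $\log\log N\ll_{\eps}(\log N)^{\eps}$ after the $\log(1/\delta)$ factors cancel, and the argument closes as you intend.
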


Under slightly different hypotheses, one can locate regularly-spaced sequences which are much longer than those provided by the Fraser-Yu theorem. Our first theorem, below, shows that there is a significant quantitative improvement to be had by considering colourings instead of density. It can be thought of as an $L$-regular analogue of van der Waerden's theorem.

\begin{Theorem}\label{Colouring}
Let $r$ and $N$ be positive integers such that  $N\geq 3^{r^2+r}$. Then if $[N]=A_1\cup\cdots\cup A_r$ is a partition of $[N]$, we have
\begin{equation}\label{Colouring1}
    \max\{R_2(A_i):1\leq i\leq r\}\geq \frac{N^{1/r}}{3^r}.
\end{equation}
\end{Theorem}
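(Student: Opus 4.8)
The plan is to argue by induction on the number of colours $r$. The base case $r=1$ is trivial: $[N]$ is itself a $2$-regular sequence (all gaps equal $1$), so $R_2(A_1)=N\ge N/3$. For the inductive step I would assume the theorem for $r-1$ colours, fix a partition $[N]=A_1\cup\cdots\cup A_r$ with $N\ge 3^{r^2+r}$, and set $s=\lceil N^{(r-1)/r}\rceil$. Everything then comes down to a dichotomy about the last colour class $A_r$: \emph{either} some block of $s$ consecutive integers inside $[N]$ is disjoint from $A_r$, \emph{or} every such block meets $A_r$.

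In the first case, fix an interval $J\subseteq[N]$ of $s$ consecutive integers with $J\cap A_r=\emptyset$; then $J$ is covered by the $r-1$ classes $A_1,\dots,A_{r-1}$, and since $2$-regularity depends only on consecutive differences we may treat $J$ as $[\,|J|\,]$ with an $(r-1)$-colouring. Since $|J|=s\ge N^{(r-1)/r}\ge (3^{r^2+r})^{(r-1)/r}=3^{r^2-1}$, which exceeds the threshold $3^{(r-1)^2+(r-1)}=3^{r^2-r}$, the inductive hypothesis applies and produces a colour $i\le r-1$ with $R_2(A_i)\ge R_2(A_i\cap J)\ge |J|^{1/(r-1)}/3^{r-1}\ge N^{1/r}/3^{r-1}\ge N^{1/r}/3^{r}$, so this case is done (with room to spare).

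In the second case I would extract a long monochromatic $2$-regular sequence from $A_r$ directly. Partition $[N]$ into $T=\lfloor N/s\rfloor$ consecutive blocks $W_1,\dots,W_T$ of length $s$ (discarding the short leftover), and for each $i$ pick $p_i\in A_r\cap W_i$. Now keep only every third point, namely $p_1,p_4,p_7,\dots$. Because $p_i$ and $p_{i+3}$ lie in blocks three apart, one has $2s< p_{i+3}-p_i<4s$; hence all consecutive gaps of the kept sequence lie in $(2s,4s)$, so the sequence is $2$-regular (take $X=2s$), it lies entirely in $A_r$, and it has length at least $T/3\ge (N^{1/r}-2)/3$, which again beats $N^{1/r}/3^r$. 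The conceptual content here is just that a colour class meeting every $s$-block contains a regularly spaced progression of length $\approx N/s$, once one remembers to jump by more than $2s$ at a time (a step of $1$ would only give gaps in $[1,2s]$, far from $2$-regular) while staying below $4s$ (which is what caps the ratio at $2$).

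The point that needs care is the numerology: the scale $s\approx N^{(r-1)/r}$ must be large enough that the recursion in the first case satisfies its hypothesis $|J|\ge 3^{(r-1)^2+(r-1)}$ and small enough that the second case yields $\gtrsim N^{1/r}/3$ terms, and one checks that $N\ge 3^{r^2+r}$ comfortably guarantees both (indeed $N\ge 3^{r^2}$ would essentially suffice, the extra factor $3^r$ mainly serving to make $N^{1/r}/3^r\ge 3$, so that the conclusion is non-vacuous). I don't anticipate a genuine obstacle beyond getting this exponent bookkeeping straight; the whole argument rests on the choice of the single scale $s$ and on the elementary observation in the second case.
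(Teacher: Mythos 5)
Your proof is correct and follows essentially the same route as the paper: induction on $r$ with a dichotomy at scale roughly $N^{(r-1)/r}$ — either an interval of that length avoids the distinguished colour class, in which case you recurse on the remaining $r-1$ colours, or the class meets every block at that scale, in which case taking one point from every third block gives the $2$-regular sequence (this is exactly the mechanism of the paper's Lemma \ref{Covering}). The only (cosmetic) differences are that you fix the scale $s=\lceil N^{(r-1)/r}\rceil$ in advance and work with an arbitrary colour class, whereas the paper singles out the largest class and uses its maximal gap $M$ as the scale; your numerology checks out in both cases.
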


In \cite{Bourgain}, it is proved that Szemer\'edi-type theorems are quantitatively improved by looking at sumsets of dense sets, rather than dense sets themselves. Further results in this direction can be found in, for instance, \cite{Green},
\cite{CrootRuzsaSchoen}, \cite{CrootSisask}, \cite{CrootLabaSisask}, and \cite{FHR}. The following theorem shows that very long regular sequences can be found in difference sets as well. For very dense sets we have the following.
\begin{Theorem}\label{DenseDifference}
Let $N$ be a positive integer and $\delta\in(0,1]$ be such that $N\geq 3^{\lfloor2/\delta\rfloor^2+\lfloor2/\delta\rfloor}$. Suppose $A\subseteq [N]$ has density at least $\delta$. Then \begin{equation}\label{DenseDifference1}
    R_2(A-A)\geq \frac{N^{\delta/2}}{3^{2/\delta}}.
\end{equation}
\end{Theorem}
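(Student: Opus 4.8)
The plan is to derive Theorem~\ref{DenseDifference} from Theorem~\ref{Colouring} by running the latter with $r=\lfloor 2/\delta\rfloor$ colours. This choice is dictated by the statement itself: with $r=\lfloor 2/\delta\rfloor$ the hypothesis $N\geq 3^{r^{2}+r}$ of Theorem~\ref{Colouring} is precisely the hypothesis $N\geq 3^{\lfloor 2/\delta\rfloor^{2}+\lfloor 2/\delta\rfloor}$ assumed here, and the conclusion $\max_{i}R_{2}(A_{i})\geq N^{1/r}/3^{r}$ already implies \eqref{DenseDifference1}, since $1/r\geq\delta/2$ gives $N^{1/r}\geq N^{\delta/2}$ and $r\leq 2/\delta$ gives $3^{r}\leq 3^{2/\delta}$. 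Thus the whole content is to manufacture, out of the density hypothesis on $A$, an $r$-colouring of $[N]$ (or of a dilate of an interval, which carries the same $R_{2}$-invariance) for which a long monochromatic $2$-regular sequence can be transported into $A-A$ with no loss of length.

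The reason $r=\lfloor 2/\delta\rfloor$ is the right count is a gap estimate. Write $A=\{a_{1}<\dots<a_{k}\}$, $k=|A|\geq\delta N$, and let $g_{i}=a_{i+1}-a_{i}$, so $\sum_{i=1}^{k-1}g_{i}=a_{k}-a_{1}\leq N$. A gap with $g_{i}>2/\delta$ contributes more than $2/\delta$ to this sum, so fewer than $\delta N/2$ of the gaps exceed $2/\delta$; consequently at least $(k-1)-\delta N/2\geq \delta N/2-1$ of them lie in $\{1,\dots,r\}$. Passing to the partial sums $T_{0}=0<T_{1}<\dots<T_{k-1}\leq N$ with $T_{j}=g_{1}+\dots+g_{j}$, the set $T=\{T_{0},\dots,T_{k-1}\}$ is a translate of $A$, hence contained in $A-A$; it has at least $\delta N$ elements, and all but at most $\delta N/2$ of its consecutive gaps lie in $\{1,\dots,r\}$. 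More generally every consecutive sum $T_{j}-T_{i}$ ($i<j$) lies in $A-A$, which is the flexibility to be exploited.

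From here I would colour the index range $\{0,\dots,k-1\}$ (or the part of $[N]$ between $a_{1}$ and $a_{k}$) by a function of the local gap sizes, truncated to $\{1,\dots,r\}$, together with a bounded amount of coarse positional data, arranged so that within a colour class the consecutively selected gaps sum into a common dyadic window. A monochromatic $2$-regular sequence of length $\geq N^{1/r}/3^{r}$ produced by Theorem~\ref{Colouring} then re-reads as a $2$-regular subsequence of $T\subseteq A-A$ of the same length, giving the claimed bound. (If at some stage one instead locates an honest arithmetic progression of the required length inside $A-A$ --- for instance because $A$ is concentrated in a short interval, so $A-A$ is itself essentially an interval --- the conclusion is immediate and Theorem~\ref{Colouring} is not needed; the colouring argument is meant to cover the opposite case, where $A-A$ is spread out.)

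The crux, and the step I expect to be the real obstacle, is this transport: large gaps of $A$ leave holes in $T$ (and in $A-A$), and a $2$-regular sequence handed back by Theorem~\ref{Colouring} may try to step across such a hole and create a jump violating the factor-$2$ condition. Making the colouring so that any monochromatic $2$-regular sequence only chains together gaps of comparable size --- or, alternatively, first deleting the $O(\delta N)$ oversized gaps and restricting to a dilate of a genuine interval on which Theorem~\ref{Colouring} applies verbatim --- is where the work sits, and is what accounts for the factor $3^{r}$ and for the lower bound $N\geq 3^{r^{2}+r}$ on $N$.
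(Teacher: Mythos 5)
You have correctly identified the right reduction: take $r=\lfloor 2/\delta\rfloor$, note that the hypothesis on $N$ matches $N\geq 3^{r^2+r}$, and observe that a monochromatic $2$-regular sequence of length $N^{1/r}/3^r$ coming from Theorem~\ref{Colouring} is more than enough, since $N^{1/r}\geq N^{\delta/2}$ and $3^r\leq 3^{2/\delta}$. But the entire substance of the theorem is the construction of an $r$-colouring of $[N]$ whose colour classes transport $2$-regular sequences into $A-A$ without loss, and this is exactly the step your proposal leaves open --- you say so yourself in the final paragraph. The colouring you sketch (by truncated local gap sizes of $A$, plus unspecified ``coarse positional data'') does not do the job: a monochromatic set under such a colouring is not contained in any single translate of $A-A$, so a $2$-regular sequence produced by Theorem~\ref{Colouring} in a colour class has no reason to correspond to a $2$-regular sequence, or indeed any sequence, inside $A-A$. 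The gap-counting observation (few gaps of $A$ exceed $2/\delta$) is true but is not used in any complete argument, and the fallback remark about $A-A$ being ``essentially an interval'' is not a proof in either regime.

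The missing idea is Ruzsa's Covering Lemma. Applying it with $B=[N]$ gives a set $X\subseteq[N]$ with $|X|\leq |A+[N]|/|A|\leq 2N/|A|\leq 2/\delta$ and $[N]\subseteq A-A+X$. One then colours each $n\in[N]$ by the minimal $x\in X$ with $n\in A-A+x$; this is an $r$-colouring with $r=\lfloor 2/\delta\rfloor\geq|X|$, and each colour class is by construction a subset of a single translate $A-A+x$. Theorem~\ref{Colouring} then yields a $2$-regular sequence of length at least $N^{1/r}/3^r$ inside some $A-A+x$, and since $2$-regularity is translation invariant, subtracting $x$ places a $2$-regular sequence of the same length inside $A-A$. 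This is precisely the clean ``transport with no loss'' you were looking for; without the covering lemma (or an equivalent device producing a bounded cover of $[N]$ by translates of $A-A$), the argument as proposed does not close.
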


For sparser sets, we can apply a density increment strategy to improve on Theorem \ref{DenseDifference}. The result is a much milder dependence on $\delta$.

\begin{Theorem}\label{SparseDifference}
Let $\delta\in(0,1]$ and set $s=\lceil \log_2(1/4\delta)\rceil$. Suppose $N\geq\max\{(2s)^s,3^{73s}\}$ is a positive integer and that $A\subseteq [N]$ has density at least $\delta$. Then
\begin{equation}\label{SparseDifference1}
    R_2(A-A)\geq 3^{-8}N^{1/(8s+8)}.
\end{equation}
\end{Theorem}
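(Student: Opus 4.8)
The plan is to run a density-increment argument which terminates in an application of Theorem~\ref{DenseDifference} at constant density. Two elementary reductions will be used throughout. First, if $P=\{a+qt:0\le t<M\}$ is an arithmetic progression and $A'=A\cap P$, the affine bijection $P\to[M]$ carries $A'-A'$ onto $q(\bar B-\bar B)$ for some $\bar B\subseteq[M]$; since the inequalities defining $2$-regularity are unchanged under multiplication by $q>0$, this gives $R_2(A-A)\ge R_2(A'-A')=R_2(\bar B-\bar B)$. Second, $(A\cap P)-(A\cap P)\subseteq A-A$. Hence it suffices to produce a long $2$-regular sequence in the difference set of a rescaled copy of $A$ restricted to a suitable sub-progression of $[N]$.

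First I would construct a nested chain $P_0=[N]\supseteq P_1\supseteq\cdots$ of sub-progressions along which the density of $A$ doubles at each step, starting from $\alpha_0\ge\delta\ge 2^{-s-2}$. At stage $i$ one rescales so that $P_i=[M_i]$, partitions $[M_i]$ into consecutive blocks $I_1,\ldots,I_K$ of equal length, with $K$ chosen of size roughly $N^{1/(s+1)}$, and asks whether some $I_t$ contains at least a $2\alpha_i$-proportion of its points in $A$. If so, take $P_{i+1}=I_t$ and continue with $\alpha_{i+1}\ge 2\alpha_i$. The density can double at most $s$ times before it exceeds $1/4$, so this happens at most $s$ times, and the choice of $K$ keeps $M_i\ge N/K^s\ge N^{1/(s+1)}$ at every stage (this needs $N$ as large as hypothesised, so that the blocks never degenerate). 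As soon as $\alpha_i\ge 1/4$ I would stop and apply Theorem~\ref{DenseDifference} with $\delta=\tfrac14$, so that $\lfloor 2/\delta\rfloor=8$ and the only constraint is that the current length be at least $3^{72}$, which holds since $N\ge 3^{73s}$: this produces a $2$-regular sequence in $(A\cap P_i)-(A\cap P_i)\subseteq A-A$ of length at least $M_i^{1/8}/3^{8}\ge N^{1/(8s+8)}/3^{8}$, which is the claimed bound.

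It remains to deal with a stage at which no block is twice as dense. Then every $I_t$ has density below $2\alpha_i$ while the blocks average at least $\alpha_i$, so at least $K/4$ of the blocks are ``semi-dense'', with density at least $\alpha_i/2$; let $G\subseteq[K]$ be their index set, a set of density at least $\tfrac14$. Theorem~\ref{DenseDifference} applied to $G$ (again with $\delta=\tfrac14$, using $K\ge 3^{72}$) gives a $2$-regular sequence $d_1<\cdots<d_n$ inside $G-G$ — we may take the $d_\nu$ positive — with $n\ge K^{1/8}/3^{8}$. Writing $d_\nu=k_\nu-j_\nu$ with $j_\nu,k_\nu\in G$, the set $(A\cap I_{k_\nu})-(A\cap I_{j_\nu})$ is a nonempty subset of $A-A$ lying in a window of width $2M_i/K$ centred at $d_\nu M_i/K$. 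To convert the $d$-sequence into a genuine $2$-regular sequence in $A-A$, I would first thin it — use Lemma~\ref{SmallerL} to replace $2$-regularity by $(1+\tfrac1l)$-regularity for a suitable fixed $l$, then pass to a sub-sequence whose minimum gap exceeds a fixed constant — so that the windows above become pairwise disjoint and correctly ordered and the bounded jitter of size $M_i/K$ inside each of them cannot spoil $2$-regularity; selecting one element of $A-A$ from each window then finishes the semi-dense case.

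The hard part will be this transfer step together with the calibration of $K$. The transfer must cost only a bounded multiplicative constant, since any loss that accumulates across the (up to) $s$ stages would destroy the clean factor $3^{-8}$ in the conclusion; and $K$ is squeezed between being small enough that $M_s$ does not drop below $N^{1/(s+1)}$ and large enough that the semi-dense case still delivers a sequence of length at least $3^{-8}N^{1/(8s+8)}$. Reconciling these two demands — presumably by also exploiting the fine-scale distribution of $A$ inside the semi-dense blocks, so that the semi-dense bound is measured against $M_i$ rather than only against the block count $K$ — is the real content of the argument, and is what pins down the precise hypothesis $N\ge\max\{(2s)^s,\,3^{73s}\}$.
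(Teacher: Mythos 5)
Your overall architecture --- a density-doubling increment that terminates in Theorem~\ref{DenseDifference} at constant density, together with a ``lifting'' of a coarse regular sequence of block-differences back into $A-A$ with bounded jitter --- is the same as the paper's, and your lifting plan (thin the coarse sequence so its gaps dominate the block-width jitter, then fix things up with Lemma~\ref{SmallerL} or Lemma~\ref{Covering}) is essentially the argument the paper actually carries out. But there is a genuine gap, and you have located it yourself: the transfer-plus-calibration step is left undone, and with your calibration it cannot deliver the stated bound. In your ``no block doubles'' case the coarse set $G$ lives in $[K]$ with $K\approx N^{1/(s+1)}$ and you only certify its density to be an absolute constant like $1/4$, so Theorem~\ref{DenseDifference} gives a $2$-regular sequence in $G-G$ of length about $K^{1/8}/3^8\approx N^{1/(8s+8)}/3^8$ --- exactly the target, with zero slack. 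Any correct treatment of the jitter of size $\pm M_i/K$ then costs a further multiplicative constant: when the coarse gap is comparable to the block length the lifted sequence is only $L$-regular for some $L>2$, and recovering $2$-regularity via Lemma~\ref{Covering}, or pre-thinning via Lemma~\ref{SmallerL}, loses a bounded factor in the length. Hence the non-increment case lands strictly below $3^{-8}N^{1/(8s+8)}$, and your closing remark about ``exploiting the fine-scale distribution inside the semi-dense blocks'' is a hope rather than an argument.

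The fix --- and it is exactly what the paper does --- is to arrange that the case requiring a transfer comes with a strictly better exponent, while the case that produces the clean constant needs no transfer at all. Note first that semi-density is irrelevant to your transfer: one representative pair per coarse difference suffices, so \emph{occupied} blocks are all you need, and in your non-increment case (every block below $2\alpha_i$, average at least $\alpha_i$) more than half the blocks are occupied. Applying Theorem~\ref{DenseDifference} to the set of occupied indices at density $1/2$ gives exponent $1/4$ in the number of blocks, so the constant lost in the jitter argument (the paper splits into coarse step $X\geq 3$, where the lifted set is $8$-regular, and $X\leq 2$, where it is directly $8$-covered by intervals of length $6M$; either way Lemma~\ref{Covering} costs a factor $24$) is absorbed, since $N^{1/(4s+4)}$ comfortably exceeds $N^{1/(8s+8)}$ once $N\geq 3^{73s}$. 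The paper runs this at scale $k=\lfloor N^{1/s}\rfloor$ after first passing to a subinterval of length $k^s$, with the dichotomy ``at least half the $k$ blocks occupied'' versus ``some block has density at least $2\delta$'' (your increment condition is just the conclusion of the sparse-occupancy case, so the recursion is no harder to trigger), and the pristine constant $3^{-8}$ comes only from the terminal, transfer-free step: after at most $s-1$ descents the set has density at least $1/4$ inside a single block, whose internal differences lie in $A-A$ with no lifting, so Theorem~\ref{DenseDifference} applies verbatim. If you rework your write-up with occupied blocks at density $1/2$ in the transfer case (and attend to the integer-rounding of block lengths, which is where the hypothesis $N\geq(2s)^s$ earns its keep), your outline becomes a correct proof along the paper's lines.
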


We next relate the notions of regularity and convexity. This allows us to deduce theorems similar to those preceding but for convex sequences. We emphasize the consequences of Theorem \ref{Colouring} and Theorem \ref{SparseDifference}.
\begin{Theorem}\label{RegularToConvex}
Let $A$ be a set of integers and let $C(A)$ denote the length of the longest strictly convex sequence contained in $A$. Then
\[C(A)\geq\left\lfloor\frac{R_2(A)^{1/2}}{4}\right\rfloor.\]
Consequently, if $[N]=A_1\cup\cdots\cup A_r$ is a partition of $[N]$ and $N\geq 3^{r^2+r}$, then
\begin{equation}\label{Colouring2}
    \max\{C(A_i):1\leq i\leq r\}\geq \left\lfloor\frac{N^{1/2r}}{4\cdot 3^{r/2}}\right\rfloor.
\end{equation}
Similarly, if $A\subseteq[N]$ has density at least $\delta$ and $N\geq\max\{(2s)^s,3^{73s}\}$ then \begin{equation}\label{ConvexDifference1}
    C(A-A)\geq \left\lfloor\frac{N^{1/(16s+16)}}{4\cdot3^8}\right\rfloor,
\end{equation}
where $s=\lceil \log_2(1/4\delta)\rceil$.
\end{Theorem}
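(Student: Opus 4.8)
The plan is to prove the main inequality $C(A)\geq\lfloor R_2(A)^{1/2}/4\rfloor$ first, and then obtain the two displayed consequences by simply substituting the lower bounds for $R_2$ from Theorem \ref{Colouring} and Theorem \ref{SparseDifference} (the latter applied to the difference set $A-A$). So the entire content is the first line, and I would structure the argument as follows. Let $A'\subseteq A$ be a $2$-regular subset with $|A'|=R_2(A)=:M$, say $A'=\{a_1<\cdots<a_M\}$ with first differences $d_i=a_{i+1}-a_i$ satisfying $X\leq d_i\leq 2X$ for all $i$. The goal is to extract from $A'$ a strictly convex subsequence of length about $\sqrt M/4$; convexity of a subsequence $a_{i_0}<a_{i_1}<\cdots$ means the consecutive gaps $a_{i_{j+1}}-a_{i_j}$ are strictly increasing.

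The key idea: a subsequence gap $a_{i_{j+1}}-a_{i_j}$ is a sum of consecutive $d_i$'s, so it lies between (number of terms summed)$\cdot X$ and (number of terms summed)$\cdot 2X$. Thus if I build the subsequence so that the $j$-th step skips $w_j$ of the original terms (i.e.\ $i_{j+1}-i_j=w_j$), the $j$-th subsequence gap lies in $[w_jX,\,2w_jX]$. To force strict monotonicity of these gaps it suffices to choose the window lengths $w_1<w_2<\cdots$ so that $2w_jX < w_{j+1}X$, i.e.\ $w_{j+1}\geq 2w_j+1$; the cheapest such choice is $w_j=2^{j}-1$ or similar, but that is wasteful. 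A better balance: use $w_j = j$ won't work since $2j \not< j+1$. Instead I should take roughly $w_j$ growing geometrically with ratio just above $2$ — but a cleaner route, which is presumably what gives the $\sqrt M/4$ with the factor $4$, is to \emph{first} pass to a sub-subsequence on which consecutive gaps are comparable within a factor better than $2$, or to use a two-scale argument. Concretely: partition the index set $\{1,\dots,M-1\}$ into consecutive blocks $B_1,B_2,\dots,B_k$ of sizes $1,3,5,7,\dots$ (so $|B_t|=2t-1$ and $\sum_{t\le k}|B_t|=k^2$), let the subsequence consist of the left endpoints of these blocks. Then the $t$-th subsequence gap equals $\sum_{i\in B_t} d_i \in [(2t-1)X,\,2(2t-1)X]$. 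Strict convexity requires $2(2t-1)X < (2t+1)X$, which fails — so this naive block choice is still too crude by a constant, and the factor-$2$ regularity is exactly the enemy.

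The honest fix, and the step I expect to be the main obstacle, is to handle the factor-$2$ slack. I would invoke Lemma \ref{SmallerL}: applying it with a suitable integer $l$ replaces $A'$ by a $(1+1/l)$-regular subset $A''$ of size $\geq |A'|/(4l+2)$. On a $(1+1/l)$-regular sequence, a subsequence gap summing $w$ terms lies in $[wX,(1+1/l)wX]$, so strict monotonicity of consecutive subsequence gaps needs only $(1+1/l)w_t X < w_{t+1}X$, i.e.\ $w_{t+1}\geq w_t + \lceil w_t/l\rceil + 1$ roughly; taking $w_t$ growing like an arithmetic-type progression with common ratio $1+1/l$, one can fit $k$ blocks into the $|A''|$ available terms with $k \asymp l\log|A''|$ — but that is only logarithmic, which is worse, so instead one should \emph{not} shrink $L$ all the way and instead combine the two effects. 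The optimal tradeoff: take $l$ a free parameter, use blocks of sizes forming an arithmetic progression with common difference chosen to beat the $(1+1/l)$ factor over a \emph{bounded} range of $t$; balancing $k \approx \sqrt{|A''|} \approx \sqrt{M/(4l+2)}$ against the convexity constraint, the clean choice $l=1$ (i.e.\ just use $2$-regularity directly) with block sizes $w_t$ satisfying $w_{t+1} = 2w_t + 1$ gives $k\asymp\log M$, while moderate $l$ and arithmetic-progression block sizes $w_t = c + t$ valid while $t \le l$ gives short runs — so the truly right move is: choose blocks of equal-ish size but carefully ordered, or, more simply, observe that among the $M-1$ gaps $d_i\in[X,2X]$, the partial sums $s_i = d_1+\cdots+d_i$ form a set of $M$ reals in $[0,2MX]$, and we seek a strictly convex subsequence of $(s_i)$, which is a clean extremal problem: I would prove by a greedy/pigeonhole argument that any increasing sequence of $M$ reals all of whose consecutive gaps lie in $[X,2X]$ contains a strictly convex subsequence of length $\lfloor\sqrt{M}/4\rfloor$ — take every $\lceil\sqrt M/2\rceil$-th term so the $t$-th gap is a sum of $\lceil\sqrt M/2\rceil$ terms, landing in an interval of multiplicative width $2$ and additive width $\lceil\sqrt M/2\rceil X$; now within these $\approx 2\sqrt M$ subsequence-gaps, each a real number in an interval $[Y,2Y]$ of length $Y$, apply Erd\H os--Szekeres/monotonicity for the ordinary (non-strict) increasing subsequence and then perturb — but to get \emph{strict} increase with the stated constant I would instead directly build the subsequence in two passes, first coarsely (fixed window $\approx\sqrt M$) to reduce the count and convert the multiplicative factor $2$ into a single additive jump, then finely, and track constants to land on $\lfloor R_2(A)^{1/2}/4\rfloor$. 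This constant-chasing, reconciling the factor $2$ with strict convexity via the right window size, is the crux; everything after — plugging $R_2(A)\geq N^{1/2r}/3^{r/2}\cdot(\text{stuff})$ wait, plugging $R_2(A_i)\geq N^{1/r}/3^r$ to get $C(A_i)\geq\lfloor N^{1/2r}/(4\cdot 3^{r/2})\rfloor$, and $R_2(A-A)\geq 3^{-8}N^{1/(8s+8)}$ to get $C(A-A)\geq\lfloor N^{1/(16s+16)}/(4\cdot 3^8)\rfloor$ — is immediate from the already-established theorems and needs only the monotonicity of $x\mapsto\lfloor\sqrt x/4\rfloor$.
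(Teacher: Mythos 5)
Your reduction of the two displayed consequences to the first inequality is exactly right (and matches the paper), but the proof of the first inequality itself is not there: every construction you actually write down you then correctly discard (consecutive odd block sizes fail because $2(2t-1)\not<2t+1$; Lemma \ref{SmallerL} plus geometric windows only gives logarithmic length; the Erd\H{o}s--Szekeres pass on the coarse gaps does not give convexity of the original sequence, since restricting to a sub-subsequence changes the gaps), and you end by explicitly flagging the reconciliation of the factor $2$ with strict convexity as an unresolved ``crux.'' That crux is precisely the content of the theorem, so as it stands this is a genuine gap, not a completed argument. The structural reason your block-sum attempts cannot work is that summing $w$ raw differences $d_i\in[X,2X]$ leaves a \emph{multiplicative} factor-$2$ uncertainty, i.e.\ an additive error of size $wX$ growing with the window, and no choice of window sizes with subexponential growth can outrun that.

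The missing idea, which is the paper's proof, is to stop working with sums of raw differences and instead use the covering from Lemma \ref{Covering}: a $2$-regular sequence $B$ is $2$-covered by consecutive intervals $I_1,I_2,\dots$ of length $2X$, each containing at least one and at most two elements of $B$. If you now select one element $s_{i_j}$ from the interval $I_{2j^2}$ (quadratically spaced indices), the location of each selected element is pinned down to within a single interval, so the uncertainty in each gap is a \emph{fixed} additive $O(X)$, independent of how far apart the chosen intervals are; meanwhile the index spacing $2(j+1)^2-2j^2=4j+2$ grows linearly, giving
\[
2X\cdot 4j\ \leq\ s_{i_{j+1}}-s_{i_j}\ \leq\ 2X\cdot(4j+2),
\]
and since $4j+2<4(j+1)$ these gaps are strictly increasing. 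Since at least $|B|/2$ intervals are occupied, the indices $2j^2$ are available for $j$ up to about $|B|^{1/2}/2$, which yields a strictly convex subsequence of length at least $\lfloor R_2(A)^{1/2}/4\rfloor$ with no delicate constant-chasing. Your closing remark about converting ``the multiplicative factor $2$ into a single additive jump'' is pointing in this direction, but without the covering-plus-quadratic-indices construction the proof is incomplete.
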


To complement the theorems above, we provide a number of constructions, the first of which demonstrates that estimates (\ref{Colouring1}) and (\ref{Colouring2}) are pretty sharp. 
\begin{Theorem}\label{ColouringConstruction}
For any positive integer $r$, there is an arbitrarily large integer $N$ such that $[N]$ has a partition $[N]=A_1\cup\cdots\cup A_r$ with the properties
\[\max\{R_2(A_i):1\leq i\leq r\}\leq 2(r-1)!N^{1/r}\]
and
\[\max\{C(A_i):1\leq i\leq r\}\leq r!N^{1/r}.\]
\end{Theorem}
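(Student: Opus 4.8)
\section*{Proof proposal for Theorem \ref{ColouringConstruction}}

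The plan is to produce an explicit partition and to bound $R_2$ and $C$ of each class by a scale-by-scale analysis, with the numerology coming from an induction on $r$. Fix a large integer $b$ and take $N=b^r$, so $N^{1/r}=b$; identify $[N]$ with the leaves of a height-$r$, $b$-ary tree, a vertex at depth $j$ corresponding to a \emph{level-$j$ block} --- a dyadic-style interval of length $b^{r-j}$, the leaves being singletons. Build $c\colon[N]\to\{1,\dots,r\}$ by induction on $r$: the case $r=1$ is the trivial partition $[N]=A_1$, for which $R_2(A_1)=N\le 2\cdot0!\cdot N$ and $C(A_1)\le N=1!\cdot N$ hold vacuously. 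For the inductive step, fix inside each level-$1$ block a copy of a good $(r-1)$-colouring of $[b^{r-1}]$ in the colours $\{1,\dots,r-1\}$, but arrange that consecutive level-$1$ blocks receive copies that differ by a cyclic shift of the colour names and, more importantly, that a sparse portion of each block (laid out so as to avoid runs and geometric-type progressions by reusing the construction in low ``dimension'') is overwritten with the new colour $r$. The purpose of colour $r$ is to prevent a $2$-regular or convex sequence from passing from one level-$1$ block into the next while remaining a single colour, and the purpose of the shift is to make such a passage cost either a colour change or a disproportionately long gap.

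The crux is the estimate $R_2(A_i)\le 2(r-1)!\,b$ for each $i$, proved alongside $C(A_i)\le r!\,b$ by the same induction. Suppose $S=(a_1<\dots<a_n)\subseteq A_i$ is $2$-regular with spacing scale $X$, so all gaps lie in $[X,2X]$. If $X\ge b^{r-1}$, every gap exceeds a level-$1$ block length, $S$ meets each level-$1$ block at most once, and $\sum(\mathrm{gaps})\le N$ forces $n\le b$, well inside the bound. If $X<b^{r-1}$, then all gaps are $<2b^{r-1}$, so $S$ is supported on a run of consecutive level-$1$ blocks, skipping at most one at a time; in each such block $S$ restricts to a $2$-regular sequence of scale $X$ lying in one class of the embedded $(r-1)$-colouring, hence of length $\le 2(r-2)!\,b$ by the inductive hypothesis, while the number of blocks $S$ can meet is controlled by $\sum(\mathrm{gaps})\le N$ together with the value of $X$. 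This finishes the cases where $X$ is appreciably smaller than $b^{r-1}$; in the remaining band of \emph{intermediate} scales --- $X$ comparable to but below $b^{r-1}$ --- one invokes the structure built into colour $r$ and the inter-block shift to cap the number of blocks $S$ traverses before being forced out of $A_i$. Optimising the constant across the three regimes shows that the passage from $r-1$ to $r$ costs a factor of at most $r-1$ (a per-block count of $2(r-2)!\,b$ over at most $r-1$ useful blocks), giving $2(r-1)!\,b$; for strictly convex sequences the analogous bookkeeping, using that the first differences are strictly increasing rather than merely within a factor of two --- so that a boundary-crossing gap, once it has grown, never shrinks again --- costs a factor of at most $r$ and yields $r!\,b$.

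The main obstacle is precisely the intermediate-scale case, and more precisely the design of the colour-$r$ gadget: it must be dense enough to sever every $2$-regular (and every convex) sequence that tries to straddle level-$1$ blocks, yet sparse and ``anti-regular'' enough that colour $r$ itself contains no $2$-regular sequence longer than $2(r-1)!\,b$ and no convex sequence longer than $r!\,b$, which forces its internal structure to be produced recursively by the very same construction, now applied at the scale of the block boundaries. Checking that this can be arranged while losing only the factor $r-1$ (resp.\ $r$) per level, rather than an additional constant at each step, is the delicate point; everything else reduces to the two elementary facts that a $2$-regular sequence inside an interval of length $m$ has length at most $m$, and a strictly convex one has length at most $\sqrt{2m}+1$, applied at the scales $b^{r-j}$ for $j=1,\dots,r-1$.
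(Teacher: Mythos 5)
There is a genuine gap, and you have in fact identified it yourself: the ``colour-$r$ gadget'' that is supposed to handle the intermediate-scale/boundary-straddling case is never constructed, and its key properties are never verified. Worse, the mechanism you sketch cannot work as described. If you merely overwrite a \emph{sparse} portion of each level-$1$ block with colour $r$ and cyclically shift the names of the colours $\{1,\dots,r-1\}$ between blocks, then every colour $i\neq r$ is still present in every block; nothing forces a monochromatic colour-$i$ sequence that passes from one block to the next to incur a large gap, so a $2$-regular sequence of scale $X$ just below $b^{r-1}$ (or even much smaller) could traverse on the order of $b$ blocks, picking up $2(r-2)!\,b$ elements in each, and your claimed bound of ``at most $r-1$ useful blocks'' has no supporting argument. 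Removing points of other colours cannot sever a monochromatic sequence; only the complete absence of that colour from a whole region can, by forcing a gap of the region's length.

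The paper's construction supplies exactly this missing mechanism, and it is simpler than your gadget: take $N=M^r$, split $[N]$ into $M$ intervals $I_k$ of length $M^{r-1}$, and colour each $I_k$ by induction using only the $r-1$ colours $\{j: j\not\equiv k \bmod r\}$, i.e.\ each block \emph{entirely omits} one colour, and the omitted colour cycles with $k$. Then a monochromatic $2$-regular sequence of colour $j$ meeting at least $r$ of the blocks must jump over a full block from which $j$ is absent, so some consecutive gap is at least $M^{r-1}$; by $2$-regularity all gaps are then at least $M^{r-1}/2$ and the sequence has length at most $2M$. Otherwise the sequence meets at most $r-1$ blocks, pigeonhole puts at least $|A|/(r-1)$ \emph{consecutive} elements in one block, and the inductive bound $2(r-2)!\,M$ inside that block gives $|A|\le 2(r-1)!\,M$. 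The convex case is handled analogously, using that once a gap of size $\ge M^{r-1}$ appears, convexity keeps all later gaps that large. Your multi-regime outline could be repaired by replacing the sparse overwrite and cyclic shift with this ``omit one colour per block, cyclically'' scheme, but as written the proposal is missing the one idea that makes the induction close with only a factor $r-1$ (resp.\ $r$) loss per level.
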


We also construct dense sets $A\subseteq [N]$ for which $R_2(A)$ and $C(A)$ are smaller than a power of $\log N$. This construction shows that the structure of a difference set is crucial to improving the estimates (\ref{DenseDifference1}), (\ref{SparseDifference1}) and (\ref{ConvexDifference1}).
\begin{Theorem}\label{DensityConstruction}
For arbitrarily large $N$, there is a set $A\subseteq [N]$ with $|A|\geq N/2$ such that \[R_2(A)\leq 16\lr{\frac{\log N}{\log\log N}}^2\]
and
\[C(A)\leq  24\lr{\frac{\log N}{\log\log N}}^3.\]
\end{Theorem}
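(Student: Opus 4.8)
The plan is to take for $A$ a Cantor-type dense set relative to a single, carefully chosen base $b$, so that $A$ has ``holes'' only at the scales $b^{0},b^{1},b^{2},\dots$; this simultaneously confines any $2$-regular sequence to one scale and restricts a strictly convex sequence to $O(\log N/\log\log N)$ scales.

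\emph{The construction.} Fix a large integer $b$, set $L=\flr{b/2}$ and $N=b^{L}$; letting $b\to\infty$ produces arbitrarily large $N$. Identify $\{0,1,\dots,N-1\}$ with the set of base-$b$ strings of length $L$ and let $A$ (after a translation by $1$ to land inside $[N]$, which changes neither $R_{2}$ nor $C$) consist of those $n$ no base-$b$ digit of which is $0$. Then $|A|=(b-1)^{L}$, so $|A|/N=(1-1/b)^{L}\ge(1-1/b)^{b/2}\to e^{-1/2}>\tfrac12$, giving $|A|\ge N/2$ for all large $b$. Also $\log N=L\log b=(1+o(1))\tfrac b2\log b$ while $\log\log N=(1+o(1))\log b$, so $b=(2+o(1))\log N/\log\log N$. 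It therefore suffices to prove $R_{2}(A)\le 2b^{2}+1$ and $C(A)\le b^{2}L+1$, since for $N$ large these lie below $16(\log N/\log\log N)^{2}$ and $24(\log N/\log\log N)^{3}$ respectively.

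\emph{The hole structure and the two bounds.} The one idea in the argument is: for each $j$ the integers whose $j$-th base-$b$ digit is $0$ all lie outside $A$, and they form the disjoint intervals $[cb^{j+1},cb^{j+1}+b^{j})$, $c\ge0$; thus $[N]\setminus A$ contains, for each $j$, pairwise disjoint ``holes'' of length $b^{j}$ spaced with period $b^{j+1}$. Hence an increasing sequence in $A$ all of whose consecutive differences are $<b^{j+1}$ cannot jump over a length-$b^{j+1}$ hole and so lies in an interval of length $<b^{j+2}$; in particular, if all consecutive differences lie in $[b^{j},b^{j+1})$ then, the differences being $\ge b^{j}$, the sequence has at most $b^{2}+1$ terms. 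For a $2$-regular sequence the differences lie in some $[X,2X]$; choosing $i$ with $b^{i}\le 2X<b^{i+1}$, all differences are $<b^{i+1}$ and $\ge X\ge b^{i}/2$, so the sequence lies in an interval of length $<b^{i+2}$ and has at most $2b^{2}+1$ terms (the case $2X\ge b^{L-1}$, where no hole of the required size fits in $[N]$, is trivial, since then every difference is $\ge N/(2b)$), whence $R_{2}(A)\le 2b^{2}+1$. Finally, given a strictly convex sequence in $A$, its consecutive differences $g_{1}<g_{2}<\dots$ are distinct positive integers below $N$; sorting indices by the value $\flr{\log_{b}g_{i}}\in\{0,\dots,L-1\}$ partitions the sequence into at most $L$ consecutive blocks, on each of which all differences lie in one window $[b^{j},b^{j+1})$ and which therefore has at most $b^{2}+1$ terms, so $C(A)\le b^{2}L+1$.

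\emph{Where the work is.} There is no real structural obstacle: everything rests on the one-line hole observation, which is immediate from the definition of $A$. The part requiring care is the numerology — checking, for \emph{all} sufficiently large $b$ (hence all sufficiently large $N=b^{\flr{b/2}}$), both that $|A|\ge N/2$ and that $2b^{2}+1$ and $b^{2}\flr{b/2}+1$ genuinely fit inside $16(\log N/\log\log N)^{2}$ and $24(\log N/\log\log N)^{3}$; since $b\sim 2\log N/\log\log N$ the slack is a fixed constant factor, so this is routine but must be done honestly, together with the boundary cases in which the relevant holes run past the end of $[N]$.
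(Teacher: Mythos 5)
Your construction and argument are correct, and they are a genuine (if close) variant of the paper's route rather than a transcription of it. The paper (Lemma \ref{Cantor}) builds a Cantor-type set by iteratively deleting every $K$'th subinterval at $k\approx K/2$ scales and proves, by induction on the scale, confinement properties for $2$-regular and convex subsets ((3) and (4) of that lemma), from which the bounds $4K^2$ and $3K^3$ follow; you instead define the set in one stroke via base-$b$ digit restrictions (no zero digit, $L=\lfloor b/2\rfloor$ digits) and argue non-inductively: for a $2$-regular sequence you pick the single scale $i$ with $b^i\leq 2X<b^{i+1}$ and use the period-$b^{i+2}$ holes of length $b^{i+1}$ to trap the sequence in an interval of length $<b^{i+2}$, and for a convex sequence you partition its (strictly increasing) differences into at most $L$ consecutive blocks by $\lfloor\log_b g_i\rfloor$ and bound each block by the same hole argument. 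Morally both proofs rest on the same observation --- a set with $\sim\log N/\log\log N$ scales of gaps, losing density $1/b$ per scale, confines regularly-spaced sequences to one scale and convex sequences to one block per scale --- but your digit formulation eliminates the inductive bookkeeping of properties (3)--(4) and your treatment of the convex case (sorting differences by scale) is arguably cleaner than the paper's ``all but $i(2K^2-2K)$ elements in a single interval'' induction; the paper's version, in turn, keeps the construction and its properties packaged in a reusable lemma. The pieces you defer do go through with ample slack: Bernoulli already gives $(1-1/b)^{L}\geq 1-L/b\geq \tfrac12$ for every $b$, and since $b=(2+o(1))\log N/\log\log N$ your bounds $2b^2+1$ and $b^2L+1\leq b^3/2+1$ sit below $16(\log N/\log\log N)^2$ and $24(\log N/\log\log N)^3$ by factors tending to $2$ and $6$ respectively, while the boundary scales $j\geq L-2$ are handled exactly as you indicate because all elements of $A$ have nonzero leading digit and differences at those scales are already of size $\gg N/b$.
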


On the other hand, we can also construct difference sets $A-A$ containing no long 2-regular or convex subsets, which necessitates some sort of density hypothesis in Theorems \ref{DenseDifference}, \ref{SparseDifference} and \ref{RegularToConvex}. 
\begin{Theorem}\label{DifferenceConstruction}
For arbitrarily large $n$, there is a set $A\subset[2\cdot 16^n]$ with $|A|=2^n$ and such that \[R_2(A-A)\leq 3\]
and
\[C(A-A)\leq 2n.\]
\end{Theorem}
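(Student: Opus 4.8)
My plan is to realize $A$ as a set of base-$16$ integers with binary digits. Take $A=\{1+\sum_{i=0}^{n-1}\epsilon_i16^i:\epsilon_i\in\{0,1\}\}$, so that $|A|=2^n$ and $A\subseteq[2\cdot16^n]$. Then $A-A=\{\sum_{i=0}^{n-1}\delta_i16^i:\delta_i\in\{-1,0,1\}\}$, and the whole argument rests on the fact that this difference set is very spread out. Setting $I_k=(\frac{14}{15}16^k,\frac{16}{15}16^k)$, any nonzero element of $A-A$ whose highest nonzero digit sits in position $k$ lies in $I_k$ (if that digit is $+1$) or $-I_k$ (if it is $-1$); hence $A-A$ is contained in the union of the $2n+1$ pairwise disjoint ``blocks'' $-I_{n-1},\dots,-I_0,\{0\},I_0,\dots,I_{n-1}$, and consecutive blocks are separated by a multiplicative gap of size about $14$. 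The key additional feature is self-similarity: the portion of $A-A$ lying inside $I_k$ is exactly $16^k+(A_k-A_k)$, where $A_k$ is the same construction with $n$ replaced by $k$. Both estimates will be proved by induction on $n$; the base case $n=1$, where $A-A=\{-1,0,1\}$, is immediate.

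For $R_2(A-A)\le3$, suppose for contradiction that $a_1<a_2<a_3<a_4$ is $2$-regular, so all three gaps lie in a common interval $[X,2X]$. If all four points lie in a single block, that block is either $\{0\}$ or a translate of some $A_k-A_k$ with $k<n$, so the inductive hypothesis (or triviality) rules this out. Otherwise there is at least one index where $a_j,a_{j+1}$ lie in different blocks; such a ``jump'' gap is at least $\frac{208}{15}16^\mu$ when the lower block has scale $\mu\ge0$, and at least $1$ always, whereas a gap between two points of a common block of scale $\mu$ is less than $\frac{2}{15}16^\mu$. A short case check on where the jump(s) occur among $a_1,\dots,a_4$ — including the possibilities that the sequence straddles $0$ or contains points of both signs — then exhibits two of the three gaps whose ratio exceeds $2$: either a jump gap sits adjacent to an in-block gap at the same scale, or there are two jump gaps living at genuinely different scales. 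Either way $2$-regularity is contradicted.

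For $C(A-A)\le2n$: using $A-A=-(A-A)$ and reversing, we may assume a strictly convex sequence $b_1<\dots<b_m$ has strictly increasing first differences $d_1<\dots<d_{m-1}$. Let $B_1,\dots,B_q$ be the blocks it meets, listed in increasing order, with $n_i$ of the $b_j$ in $B_i$ and $\sigma_i$ the scale of $B_i$. First, for each $i\ge2$ the difference entering $B_i$ is at least $\frac{13}{15}16^{\sigma_i}$, while any difference internal to $B_i$ is less than $\frac{2}{15}16^{\sigma_i}$; since all differences after the entering one are larger, $B_i$ contains only one point, so $m=n_1+(q-1)$. By induction $n_1\le C(B_1)\le2\sigma_1$ when $B_1=\pm I_{\sigma_1}$ with $\sigma_1\ge1$, and $n_1=1$ otherwise. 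Finally, if $q\ge3$, then comparing the first two entering differences $e_2<e_3$ — where $e_2\ge\frac{13}{15}16^{\sigma_1}$ and $e_3\le\frac{32}{15}16^{\sigma_3}$ — forces $\sigma_3\ge\sigma_1$; since the scales $\sigma_3<\sigma_4<\dots<\sigma_q$ are then distinct integers in $[\sigma_1,n-1]$, we get $q\le n-\sigma_1+2$. Combining, $m\le2\sigma_1+(n-\sigma_1+1)=\sigma_1+n+1\le2n$, using $\sigma_1\le n-1$; the remaining cases $q\le2$ or $\sigma_1\le0$ are immediate.

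The conceptual content here is modest; the real work is bookkeeping. In the $R_2$ bound the delicate point is making the jump-position case analysis genuinely exhaustive, in particular for sequences containing $0$ or points of both signs, which I expect to organize around a single ``block index'' function on $A-A$ that is monotone along increasing sequences. In the $C$ bound the delicate point is the arithmetic of the constants: the estimate must land at exactly $2n$ rather than $2n+1$, which is why I keep the sharp fractions $\frac{13}{15},\frac{14}{15},\frac{16}{15}$ throughout and treat the degenerate cases $\sigma_1=0$ and $B_1=\{0\}$ separately instead of absorbing them into a cruder bound.
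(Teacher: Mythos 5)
Your set is exactly the paper's set (the paper defines it recursively by $A_{i+1}=A_i\cup(16^i+A_i)$), but your proof is organized quite differently: the paper never introduces the global leading-digit blocks at all. It simply writes $A_{i+1}-A_{i+1}=(-16^i+A_i-A_i)\cup(A_i-A_i)\cup(16^i+A_i-A_i)$, notes that a sequence meeting two of the three pieces has a gap of length at least $16^i/2$, and concludes in two lines: a $2$-regular sequence then has minimum gap at least $16^i/4$ and hence at most one point per piece, and a convex sequence can have multiple points in only one piece, giving $C\le C(A_i-A_i)+2$. That induction avoids all of the multi-block bookkeeping your route requires, and in particular never has to confront the zero-straddling configurations.

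As written, your version has concrete gaps precisely in those configurations. In the $R_2$ part, the stated jump bound $\frac{208}{15}16^{\mu}$ is false for jumps leaving a negative block $-I_\mu$ upward (the true bound is only $\frac{13}{15}16^{\mu}$, since the next block up is $-I_{\mu-1}$) and for jumps from $-I_\mu$ across zero to $I_\mu$ (about $\frac{28}{15}16^{\mu}$); the weaker bounds still beat twice an in-block gap, so the conclusion survives, but the ``short case check'' with $0$ and mixed signs is exactly where the work is, and it is deferred. More seriously, in the $C$ part the inequality $e_3\le\frac{32}{15}16^{\sigma_3}$ is simply false when $B_2$ is a negative block of scale $\sigma_2>\sigma_3$ (then $e_3$ can be of order $16^{\sigma_2}$), and the assertion that $\sigma_3<\sigma_4<\cdots<\sigma_q$ are increasing is unjustified unless you first show $B_3,\ldots,B_q$ are all \emph{positive} blocks; for negative blocks larger value means smaller scale. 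Both points are repairable by one extra convexity comparison: since $e_3>e_2\ge\frac{13}{15}16^{\sigma_1}$ while any point of a block strictly above $-I_{\sigma_1}$ has absolute value at most $\frac{16}{15}16^{\sigma_1-1}$, the block $B_3$ cannot be negative or zero, and from $e_3\le\frac{16}{15}\bigl(16^{\sigma_1-1}+16^{\sigma_3}\bigr)$ one gets $\sigma_3\ge\sigma_1$; positivity of $B_3$ then forces $B_4,\ldots,B_q$ positive with strictly increasing scales, and your count $m\le 2\sigma_1+(n-\sigma_1+1)\le 2n$ goes through. So the skeleton works, but the key inequality you lean on is wrong in a relevant case and needs this substitute argument; and even once patched, the paper's three-piece induction gets both bounds with far less effort.
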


The rest of this paper structured is as follows. In Section \ref{RegularityAndCoveringSection}, we develop some preliminary ideas which will be used in the proofs of the main theorems and then present the proof of Lemma \ref{SmallerL} as well as a proof of Theorem \ref{RegularToConvex} which is conditional on Theorems \ref{Colouring} and \ref{SparseDifference}. In Section \ref{ColouringSection} we prove Theorem \ref{Colouring}, and in Section \ref{DifferenceSetsSection} we prove Theorems \ref{DenseDifference} and \ref{SparseDifference}. Section \ref{ConstructionsSection} contains the constructions which prove Theorems \ref{ColouringConstruction}, \ref{DensityConstruction} and \ref{DifferenceConstruction}.

\section*{Acknowledgements}
The author has benefited from insightful discussions with Giorgis Petridis. He is also supported by the NSF Award 2001622.

\section{Regularity and Covering}\label{RegularityAndCoveringSection}
In this section we will discuss some basic tools for dealing with regular sequences that, while rather simple, are fundamental in this article. Already, these tools enable us to prove Lemma \ref{SmallerL} and (conditionally) Theorem \ref{RegularToConvex}. We begin with a definition.
\begin{Definition}
By consecutive intervals, we mean a sequence of intervals of the form $I_1,\ldots,I_k$ with \[I_j=[s+jl,s+(j+1)l)\] for some real numbers $s$ and $l$. Suppose $A=\{a_1<\ldots<a_N\}$ is a sequence of real numbers. If $M$ is a positive integer, we say $A$ is $M$-covered if there are consecutive intervals $I_1,\ldots,I_k$ such that: \begin{enumerate}
\item for each $j$ we have $1\leq |A\cap I_j|\leq M$, and
\item we have $A\subseteq\bigcup_{j=1}^k I_j$. 
\end{enumerate}
\end{Definition}
The relevance of $M$-covering is illustrated by the following lemma.
\begin{Lemma}\label{Covering}
Let $A$ be a set of real numbers which admits an $M$-covering for some $M\leq |A|$. Then 
\[R_2(A)\geq \frac{|A|}{3M}.\]
Conversely, if $A$ is an $L$-regular sequence, then $A$ can be $\lceil L\rceil$-covered.
\end{Lemma}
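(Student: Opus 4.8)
The statement has two parts, and I would treat them separately. For the first part, suppose $A = \{a_1 < \cdots < a_N\}$ admits an $M$-covering by consecutive intervals $I_1, \ldots, I_k$ of common length $l$. The idea is to extract one point from each interval: for each $j$ with $A \cap I_j \neq \emptyset$, pick a representative $b_j \in A \cap I_j$. Since every interval is nonempty by condition (1), this gives $k$ points, and since each interval contributes at most $M$ points of $A$ by condition (1) again, we have $k \geq |A|/M = N/M$. Now the representatives $b_1 < b_2 < \cdots$ satisfy $0 < b_{j+1} - b_j < 2l$, because $b_j \in I_j = [s+jl, s+(j+1)l)$ and $b_{j+1} \in I_{j+1} = [s+(j+1)l, s+(j+2)l)$, so the gap lies strictly between $0$ and $2l$. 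This is \emph{almost} a $2$-regular sequence, but not quite: the lower bound on the gaps is $0$, not a uniform positive $X$. The fix is to pass to a sub-sequence by taking every third representative (or rather, a greedy selection ensuring consecutive chosen gaps are comparable). Concretely, I would select a subsequence $b_{j_1} < b_{j_2} < \cdots$ such that consecutive chosen gaps differ by a factor at most $2$; the cleanest route is to note that among any window the gaps can be pruned. Actually the slickest argument: partition the $k$ representatives into three classes by looking at which "third" of its interval the next gap spans — but let me instead observe that by keeping every third representative $b_1, b_4, b_7, \ldots$, each new gap is a sum of three old gaps, hence lies in $(l, 6l)$, wait that is a ratio of $6$ not $2$. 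So the factor-of-three loss must come from somewhere more careful; the right statement is that we can throw away at most $2/3$ of the points to make the sequence $2$-regular. I would prove this via the covering structure directly: group the intervals $I_j$ into blocks and within the surviving points argue the max gap is at most twice the min gap. The claimed bound $R_2(A) \geq |A|/3M$ tells me the intended loss is exactly a factor $3$ (one factor $M$ from the covering, one factor $3$ from regularization), so the regularization lemma should cost a factor $3$ in size — I would isolate this as the key sub-step.

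For the converse, suppose $A = \{a_1 < \cdots < a_N\}$ is $L$-regular, so there is $X > 0$ with $X \leq a_{i+1} - a_i \leq LX$ for all $i$. I want to build consecutive intervals of some length $l$ that $\lceil L \rceil$-cover $A$. The natural choice is $l = LX$ (or perhaps $l = \lceil L \rceil X$), with the intervals anchored so that $a_1$ falls at the left end: take $s = a_1$, so $I_j = [a_1 + jl, a_1 + (j+1)l)$ for $j = 0, 1, \ldots$ (shifting indices to start at $0$). Condition (2), that $A$ is contained in the union, is automatic once we take enough intervals. For condition (1), the upper bound: within any interval $I_j$ of length $l = LX$, consecutive points of $A$ are at least $X$ apart, so $I_j$ contains at most $\lfloor LX / X \rfloor + 1 = L + 1$ points — hmm, that gives $\lceil L \rceil + 1$, slightly too many; I would instead take $l = \lceil L \rceil X$ is worse, so the right move is to use half-open intervals carefully or to take $l$ slightly less than $LX$, giving at most $\lceil L \rceil$ points per interval. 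The lower bound for condition (1), that each $I_j$ meeting the range of $A$ is nonempty in $A$: this is where $L$-regularity is essential — since consecutive gaps are at most $LX = l$ (or just under $l$), the points of $A$ cannot "skip over" an entire interval of length $l$, so every $I_j$ between $I_0$ and the last one contains a point of $A$. I would simply delete any empty trailing intervals. The interval length must be chosen to make both bounds work simultaneously — balancing "gaps at most $l$ so no interval is skipped" against "interval short enough to hold at most $\lceil L \rceil$ points" — and that is the one genuinely fiddly point, resolved by an appropriate choice like $l$ equal to the supremum of the gaps (which is at most $LX$) or by a limiting/perturbation argument on the half-open endpoints.

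**Anticipated main obstacle.** The real work is the regularization step in the first part: converting the "all gaps in $(0, 2l)$" sequence of representatives into a genuinely $2$-regular sequence while losing only a factor $3$ in length. The difficulty is that naively thinning by a fixed ratio blows up the Lipschitz constant, so one needs an argument tailored to the interval structure — e.g., observing that each representative lives in a known interval and choosing which representatives to keep so that the surviving gaps, measured in units of $l$, all lie in a dyadic-type range $[X', 2X']$. I expect to handle this by a direct combinatorial selection using the intervals rather than a black-box pigeonhole, and it is the step I would write out most carefully.
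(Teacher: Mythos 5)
Your plan for the first part stalls exactly at the step that carries the whole lemma, and the computation you did attempt there is wrong in a way that hides the correct argument. You correctly pick one representative $b_j\in A\cap I_j$ from each interval (every interval is nonempty by condition (1), so there are $k\geq |A|/M$ of them), but then you try to control the gaps of the thinned sequence $b_1,b_4,b_7,\ldots$ by summing three consecutive representative-gaps, each only known to lie in $(0,2l)$, and you conclude a ratio of $6$ and give up, deferring the ``factor $3$ regularization'' as an unproven sub-step. The missing idea is that you should not sum gap bounds at all: bound the gaps of the thinned sequence directly from the positions of the intervals. If you keep one element of $A$ from each interval $I_j$ with $j\equiv 1\pmod 3$, then two consecutive kept elements lie in $I_j$ and $I_{j+3}$, so their difference is at least the total width $2l$ of the two skipped intervals and at most the $4l$ spanned from the left end of $I_j$ to the right end of $I_{j+3}$; hence all gaps lie in $[2l,4l]$ and the sequence is $2$-regular with $X=2l$, no separate regularization lemma needed. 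The count is then at least $k/3\geq |A|/(3M)$, which is exactly the claimed bound (the ``$3$'' is the thinning to every third interval, not a regularization loss). This is the paper's argument, and without it your first part is not a proof.

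The converse is essentially right and matches the paper: cover $A$ by consecutive half-open intervals of length $LX$ anchored at $a_1$; no interval between the first and last can miss $A$ because consecutive gaps are at most $LX$, and your worry about getting $\lceil L\rceil+1$ points per interval disappears once the intervals are half-open, since $m$ points of $A$ in an interval of length $LX$ have spread at least $(m-1)X$ and strictly less than $LX$, so $m-1<L$ and $m\leq\lceil L\rceil$. You flagged this resolution (``use half-open intervals carefully'') but hedged; it is the full fix and no perturbation of the interval length is needed.
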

\begin{proof}
Let $I_1,\ldots,I_k$ be the intervals of the $M$-covering of $A$, and let $l$ be the length of each $I_j$. Because each interval contains at most $M$ elements of $A$, we must have $k\geq |A|/M$. Choose an element $a_j$ of $A$ from each interval $I_j$ with $j=1\mod 3$, and consider the resulting sequence. Because
\[2l\leq a_{j+3}-a_j\leq 4l,\]
the sequence is 2-regular, and the number of $a_j$ selected is at least $|A|/3M$. 

To prove the converse, we write $A=\{a_1<\ldots<a_n\}$ with
\[X\leq a_{i+1}-a_i\leq LX\] for $i=1,\ldots,n-1$. Then $A$ can be covered by consecutive intervals of length $LX$ such that each contains at least one element of $A$ and none of them contains more than $\lceil L\rceil$ elements of $A$. 
\end{proof}

\begin{proof}[Proof of Lemma \ref{SmallerL}]
By definition, there is a number $X$ so that
\[X\leq a_{i+1}-a_i\leq 2X\] for each $i$. Let $q=2l+1$. Now cover the set $A$ by consecutive intervals $[s, s+2X)$ of length $2X$. Each interval can contain at most two elements of $A$, so there are at least $|A|/2$ consecutive intervals which intersect $A$. Choose from every $q$'th such interval an element of $A$, and let $A'=\{a_1'<\ldots<a_n'\}$ be the resulting set. Then there are $q-1$ empty intervals between consecutive elements of $A'$, while two consecutive elements are contained in $q+1$ such intervals, so we have \[(q-1)2X\leq a_{i+1}'-a_i'
\leq (q+1)2X.\]
Since \[(q+1)/(q-1)=1+\frac{1}{l},\]
the set $A'$ is $(1+1/l)$-regular.
Finally $A'$ has size
\[|A'|\geq\frac{|A|}{2q}.\]
\end{proof}

At this point, we have enough to prove the first claim of Theorem \ref{RegularToConvex}. The other claims will follow once we have established Theorems \ref{Colouring}, \ref{DenseDifference} and \ref{SparseDifference} in Sections \ref{ColouringSection} and \ref{DifferenceSetsSection}.

\begin{proof}[Proof of Theorem \ref{RegularToConvex}, first claim]
By Lemma \ref{Covering}, a $2$-regular sequence $B$ can be 2-covered by consecutive intervals of length $2X$, say $I_j=[s+(j-1)2X,s+j2X)$. Now pick $s_{i_j}$ to be any term from the sequence belonging to $I_{2j^2}$. Then
\[2X(4j)\leq s_{i_{j+1}}-s_{i_j}\leq 2X(4j+2)\]
since there are there are $4j$ intervals in between $I_{2j^2}$ and $I_{2(j+1)^2}$. From this one sees that the gaps between $s_{i_{j+1}}$ and $s_{i_j}$ are increasing as desired. Since the whole sequence intersects $|B|/2$ intervals, the convex subsequence has length at least $\lfloor|B|^{1/2}/4\rfloor$.
\end{proof}

\section{Long monochromatic regular sequences}\label{ColouringSection}
Here we prove Theorem \ref{Colouring}, which is the foundation for all subsequent results.
\begin{proof}[Proof of Theorem \ref{Colouring}]
We proceed by induction on $r$, and when $r=1$ there is nothing to do. Suppose now we know the theorem holds with $r-1$ parts, and assume that of these parts, $A_r$ has the largest cardinality so that $|A_r|\geq N/r$. Let $M$ denote the maximum difference between consecutive elements of $A_r$. Then $A_r$ is $M$-covered by intervals of length $M$, so by Lemma \ref{Covering} there is a 2-regular sequence in $A_r$ of length at least $N/(3Mr)$. This is already sufficient if $M$ is such that \[\frac{N}{3Mr}\geq \frac{N^{1/r}}{3^r}.\] Otherwise
\[M\geq \frac{3^rN^{(r-1)/r}}{3r}\] and, by the definition of $M$, there is a subinterval $I$ of $[N]$ of length $M-1$ which is disjoint from $A_r$. The interval 
$I$ is partitioned as $(I\cap A_1)\cup\cdots\cup (I\cap A_{r-1})$. Because the problem is translation invariant, we may appeal to induction and we get 
\[\max\{R_2(A_i\cap I):1\leq i\leq r-1\}\geq \frac{|I|^{1/(r-1)}}{3^{r-1}},\]
provided $|I|^{1/(r-1)}\geq 3^{r}$. We will show the stronger estimate \[|I|^{1/(r-1)}\geq \frac{N^{1/r}}{3}\]
which will also close the induction. Indeed,
\begin{align*}
    |I|^{1/(r-1)}&=(M-1)^{1/(r-1)}\\
    &\geq \lr{\frac{3^rN^{(r-1)/r}}{3r}-1}^{1/(r-1)}\\
    &=\frac{N^{1/r}}{3}\lr{\frac{3^{2r-2}}{r}-\frac{3^{r-1}}{N^{(r-1)/r}}}^{1/(r-1)}\\
    &\geq \frac{N^{1/r}}{3}\lr{\frac{3^{2r-2}}{r}-\frac{1}{3}}^{1/(r-1)}\\
    &\geq \frac{N^{1/r}}{3}.
\end{align*}
\end{proof}

\section{Long regular sequences in difference sets}\label{DifferenceSetsSection}

\begin{Lemma}[Ruzsa's Covering Lemma]
If $A$ and $B$ are finite subsets of an abelian group then there is a set $X\subseteq B$ with $|X|\leq \frac{|A+B|}{|A|}$ and such that $B\subseteq A-A+X$.
\end{Lemma}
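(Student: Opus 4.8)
The plan is to run a greedy maximal-disjointness argument on translates of $A$. First I would choose a set $X\subseteq B$ that is maximal with respect to inclusion among all subsets $Y\subseteq B$ for which the translates $\{A+y:y\in Y\}$ are pairwise disjoint. Such an $X$ exists because $B$ is finite (and the empty set trivially has the property), so one can keep adding elements of $B$ while disjointness is preserved until no further element can be added.

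For the size bound I would observe that, by construction, the sets $A+x$ with $x\in X$ are pairwise disjoint, and each is contained in $A+B$ since $x\in B$. Therefore $\sum_{x\in X}|A+x|=|X|\,|A|\leq|A+B|$, which rearranges to $|X|\leq|A+B|/|A|$.

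For the covering property I would take an arbitrary $b\in B$ and invoke the maximality of $X$. If $b\notin X$, then $X\cup\{b\}$ fails the disjointness property, so $A+b$ must intersect $A+x$ for some $x\in X$; and if $b\in X$, then $A+b$ intersects $A+x$ trivially with $x=b$. In either case there exist $a_1,a_2\in A$ and $x\in X$ with $a_1+b=a_2+x$, hence $b=(a_2-a_1)+x\in A-A+X$. Since $b\in B$ was arbitrary, $B\subseteq A-A+X$, completing the proof.

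I do not expect a real obstacle here; this is a clean extremal argument. The only point that warrants a moment's care is phrasing the maximality step so that it applies uniformly whether or not the chosen $b$ already lies in $X$, ensuring that every element of $B$ is genuinely covered.
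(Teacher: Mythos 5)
Your proposal is correct and follows essentially the same argument as the paper: take $X\subseteq B$ maximal with the translates $A+x$ pairwise disjoint, use disjointness inside $A+B$ for the bound $|X|\,|A|\leq|A+B|$, and use maximality to cover every $b\in B$ by $A-A+X$. If anything, your write-up is slightly more careful than the paper's in explicitly handling the case $b\in X$.
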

We include the proof for the sake of completeness.
\begin{proof}
Let $X$ be a maximal (with respect to inclusion) subset of $B$ such that the translates $A+x$ are disjoint for $x\in X$. Then, for any $b\in B$,
\[(b+A)\cap (x+A)\neq \varnothing.\]
From this, there exists $a,a'\in A$ such that $b+a=x+a'$ and the containment $B\subseteq A-A+X$ follows. Further, 
\[|X||A|=|A+X|\leq |A+B|\leq \frac{|A+B|}{|A|}|A|,\]
which gives the desired estimate on $|X|$.
\end{proof}

\begin{proof}[Proof of Theorem \ref{DenseDifference}]
By Ruzsa's Covering Lemma, there is a set $X\subseteq[N]$ with 
\[|X|\leq \frac{2N}{|A|}\leq \frac{2}{\delta},\]
and such that
\[[N]\subseteq A-A+X.\]
Let $r=\lfloor 2/\delta\rfloor$, so that $r\geq |X|$. We define a colouring of $[N]$ with at most $r$ parts by colouring $n$ with $x\in X$ if $x$ is minimal subject to the constraint $n\in A-A+x$. Apply Theorem \ref{Colouring}, from which we find an $x$ such that
$A-A+x$ contains a 2-regular sequence of length at least $N^{1/r}3^{-r}$. 
\end{proof}

\begin{proof}[Proof of Theorem \ref{SparseDifference}]
Let \[k=\left\lfloor N^{1/s}\right\rfloor.\] Then $k^s\leq N\leq 2k^{s}$ by the assumed size of $N$. It follows there is some subinterval $I$ of $[N]$, with length $k^s$ and in which $A$ has density at least $\delta/2$. Moreover, the problem is translation invariant. Thus by replacing $[N]$ with $I$, replacing $\delta$ with $\delta/2$ (which has the effect of replacing $s$ with $s-1$), and replacing $A$ with $A\cap I$, it suffices to show the following.

\textit{Claim}: If $N=k^s$ and $A\subseteq [N]$ has density at least $\delta$ then $R_2(A-A)\geq 3^{-8}N^{1/8s}$. 

Let $M=k^{s-1}$ and for each $n\in[N]$ write
\[n=q_nM+r_n\] where $1\leq r_n\leq M$ and $0\leq q_n<k$. For $q\leq k$, let \[A_2(q)=\{r\in \{1,\ldots, M\}:qM+r\in A\},\] and let $A_1$ denote the set of those $q$ for which $A_2(q)$ is non-empty. 

Suppose first that $A_1+1$ has density at least $1/2$ in $[k]$. Then by Theorem \ref{DenseDifference}, $A_1-A_1$ contains a 2-regular sequence of length $l$ with $l\geq k^{1/4}/3^4$, noting that $k\geq 3^{20}$. Let $q_1-q_1'<\ldots<q_l-q_l'$ denote this sequence. Because the $q_i-q_i'$ are integers, there is an integer $X\geq 1$ so that \[X\leq (q_{i+1}-q_{i+1}')-(q_{i}-q_{i}')\leq 2X.\]
For each $i=1,\ldots,l$, there are integers $r_i,r_i'\leq M$ and $a_i,a_i'\in A$ so that
\[a_i-a_i'=(q_i-q_i')M+r_i-r_i'.\]
Since $-M\leq r_i-r_i'\leq M$,
\[XM-2M\leq (a_{i+1}-a_{i+1}')-(a_{i}-a_{i}')\leq 2XM+2M.\]
From this we see that the set $D=\{a_i-a_i':i=1,\ldots,s\}\subseteq A-A$ is $L$-regular with
\[L\leq \frac{2MX+2M}{MX-2M}\leq \frac{2MX-4M+6M}{MX-2M}=2+\frac{6}{X-2}\leq 8,\]
provided $X\geq 3$. In this case we can $8$-cover $D$, so that \[R_2(A-A)\geq R_2(D)\geq  \frac{l}{24}\] by Lemma \ref{Covering}. If $X\leq 2$, then the inequalities \[(a_{i+1}-a_{i+1}')-(a_i-a_i')=((q_{i+1}-q_{i+1}')-(q_i'-q_i'))M+(r_{i+1}-r_{i+1}')-(r_i-r_{i}')<6M\]
and \[(a_{i+8}-a_{i+8}')-(a_i-a_i')=((q_{i+8}-q_{i+8}')-(q_i'-q_i'))M+(r_{i+8}-r_{i+8}')-(r_i-r_{i}')>6M\] show that the set $D$ can be $8$-covered by intervals of length $6M$, and so
\[R_2(A-A)\geq R_2(D)\geq \frac{l}{24}\] by Lemma \ref{Covering}. In either case, we have proved $R_2(A-A)\geq k^{1/4}/(3^4\cdot 24)$ which is good enough.

Now suppose $|A_1|\leq k/2$. Then, because
\[|A|=\sum_{q\in A_1}|A_2(q)|\leq |A_1|\max_q |A_2(q)|,\] there is some $q$ with $A_2(q)\geq 2\delta M$, and we note the fact that $A_2(q)-A_2(q)\subseteq A-A$. We iterate this argument, replacing $A$ with $A_2(q)$, $[N]$ with $[M]$, and $\delta$ with $2\delta$ at each step, but stopping if $A_1+1$ has density at least $1/2$ in $[k]$, in which case we are done by the argument above. After $s-1$ iterations we have a set $A'\subseteq[k]$ which has density at least $2^{s-1}\delta$ and with the property $A'-A'\subseteq A-A$. By definition, $s\geq \log_2(1/2\delta)$ so $2^{s-1}\delta\geq 1/4$ and by Theorem \ref{DenseDifference} (this time noting that $k\geq 3^{72}$) we get
\[R_2(A'-A')\geq \frac{k^{1/8}}{3^8}.\]
\end{proof}

\section{Constructions}\label{ConstructionsSection}
In this section we provide various constructions which prove Theorems \ref{ColouringConstruction}, \ref{DensityConstruction} and \ref{DifferenceConstruction}.
\begin{proof}[Proof of Theorem \ref{ColouringConstruction}]
We will assume $N=M^r$ for $M$ arbitrarily large. The partition is most easily constructed if one thinks of it as the fibers of a colouring $c_r:[N]\to \{0,\ldots,r-1\}$. In this perspective, we want to construct a function $c_r$ such that $c_r$ is not constant on any sufficiently long $2$-regular sequence. The colourings $c_r$ are constructed by induction on $r$. When $r=1$, the statement is trivial. For larger $r$, we will colour $[N]$ with the residue classes modulo $r$. To begin, divide $[N]$ into $M$ intervals \[I_k=\{kM^{r-1}+1,\ldots,(k+1)M^{r-1}\},\] of length 
$M^{r-1}$. By induction, we can $(r-1)$-colour each $I_k$ using the set of colours \[C_{k\mod r}=\{j:j\in\{1,\ldots, r\},\  j\not\equiv k\mod r\}\] 
in such a way that $I_k$ contains neither a monochromatic sequence of length $2(r-2)!M$ which is $2$-regular, nor a monochromatic sequence of length $(r-1)!M$ which is strictly convex. In this way, we have $r$-coloured all of $[N]$. 

Now let $A$ be a monochromatic sequence in $[N]$ which is $2$-regular. If $A$ intersects at most $r-1$ of the intervals $I_k$, then it must intersect a single $I_k$ in at least $|A|/(r-1)$ consecutive elements by the pigeonhole principle. Since any subsequence of consecutive elements of $A$ is also 2-regular, we conclude from induction that
\[\frac{|A|}{r-1}\leq |A\cap I_k|\leq 2(r-2)!M,\]
giving the desired bound on $|A|$. The other possibility is that $A$ intersects at least $r$ different intervals $I_k$. Suppose the colour class of $A$ is $j$. By construction, given any $r$ consecutive intervals $I_k$, one of the intervals avoids the $j$'th colour class completely. Thus some gap between consecutive elements of $A$ must be large enough to avoid such an interval, that is, there are two consecutive elements of $A$ which are separated by at least $M^{r-1}$. It follows that any two consecutive elements of $A$ are separated by at least $M^{r-1}/2$ and so 
\[|A|\leq 2M\] which closes the induction.

Next we handle the convex scenario. To analyse this case, it helps to write \[A=\{a_1<\ldots<a_n\}.\]
Suppose there are two consecutive elements, say $a_{i}$ and $a_{i+1}$, with $a_{i+1}-a_i\geq M^{r-1}$ and let assume $i$ is minimal. Then, $a_{j+1}-a_j\geq M^{r-1}$ for $j\geq i$ by convexity, so $n-i\leq M$. The remaining part of $A$ is the subsequence $A'=\{a_1<\ldots<a_i\}$, and it has consecutive differences strictly less than $M^{r-1}$. Reasoning as in the 2-regular case, $A'$ is contained in some $r-1$ consecutive intervals. One of these intervals contains at least $i/(r-1)$ consecutive points from $A'$, and these points form a strictly convex sequence. Again by induction
\[\frac{i}{r-1}\leq (r-1)!M\]
so we arrive at the estimate
\[|A|=i+(n-i)\leq (r-1)(r-1)!M+M\leq r!M.\]
\end{proof}

Theorem \ref{DensityConstruction} requires the following lemma, in which we construct a Cantor-like set off of which our construction is based. 

\begin{Lemma}\label{Cantor}
Let $k,K\geq 2$ be integer parameters and let $N=(K-1)(2K)^k$. Then there is a decreasing sequence of sets \[A_k\subseteq A_{k-1}\subseteq \ldots\subseteq A_0=[N]\] such that
\begin{enumerate}
\item $|A_{i}|\geq (1-1/K)|A_{i-1}|$,
\item for $i\geq 1$, $A_i$ is a union of intervals $I_{i,j}$ of length $(K-1)N_i$, where $N_i=(2K)^{k-i}$, 
\item any 2-regular subset of $A_i$ which is larger than $4K^2-4K$ must be confined to a single interval $I_{i,j}$, and
\item any convex subset of $A_i$ has all but at most $i(2K^2-2K)$ of its elements in a single interval $I_{i,j}$.
\end{enumerate}
\end{Lemma}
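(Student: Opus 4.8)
The plan is to construct the sets $A_i$ by an explicit recursive "interval-removal" procedure, modelled on the middle-thirds Cantor construction, and then verify properties (1)--(4) by induction on $i$. First I would set up the recursion: given $A_{i-1}$, which by induction is a disjoint union of intervals $I_{i-1,j}$ each of length $(K-1)N_{i-1} = (K-1)(2K)N_i = (2K^2-2K)N_i$, I partition each such $I_{i-1,j}$ into $2K$ consecutive blocks of length $N_i$ each (note $N_{i-1} = 2K \cdot N_i$, so $(K-1)N_{i-1}$ splits into $2K$ blocks of size $(K-1)N_i$ — I need to be a little careful with the exact block size, but the point is to cut each parent interval into $2K$ equal pieces, keep $2K-1$ of them as a single sub-interval $I_{i,j'}$ and discard one). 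Discarding one block out of $2K$ from each parent interval gives $|A_i| \geq (1 - \frac{1}{2K})|A_{i-1}| \geq (1-1/K)|A_{i-1}|$, which is property (1) (in fact slightly stronger), and property (2) holds by construction with $I_{i,j}$ being the retained union of $2K-1$ consecutive length-$N_i$-blocks, which has length $(2K-1)N_i$; I may need to adjust the statement's constant $(K-1)N_i$ versus my $(2K-1)N_i$, or instead keep $K-1$ blocks out of $2K$ — I will choose the split so that the claimed length $(K-1)N_i$ comes out, retaining $K-1$ blocks and discarding $K+1$, which only improves (1). The key design feature is that consecutive retained intervals $I_{i,j}$ and $I_{i,j+1}$ are separated by a gap of length at least $2N_i$ (the discarded blocks), so any element-to-element jump across intervals is "long."

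The heart of the argument is properties (3) and (4). For (3): suppose $B \subseteq A_i$ is $2$-regular with common difference parameter $X$, i.e. $X \le b_{m+1}-b_m \le 2X$ for all $m$. If $B$ meets two distinct intervals $I_{i,j}$, then some consecutive gap $b_{m+1}-b_m$ crosses a deleted block, hence $b_{m+1}-b_m \geq N_i$ (a conservative bound using the length of one deleted block), so $X \geq N_i/2$, and hence \emph{every} gap is $\geq N_i/2$. But $B$ also lies in $A_{i-1}$, which is a union of intervals of length $(K-1)N_{i-1}$; within the portion of $B$ lying in a single parent interval $I_{i-1,j'}$, the number of elements is at most $1 + \frac{(K-1)N_{i-1}}{N_i/2} = 1 + 2(K-1)(2K) = 4K^2-4K+1$, and once $B$ crosses into a new parent interval the gap is even larger so the same per-parent count applies; summing, if $B$ spans at least one full deleted parent-block it picks up even larger gaps — I need to argue $|B| \le 4K^2-4K$ overall, not just per parent. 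Here is the cleaner way: if $B$ meets $\ge 2$ of the $I_{i,j}$ then $X \ge N_i/2$; but within any length-$N_{i-1}$ scale, the retained intervals $I_{i,j}$ inside one parent $I_{i-1,j'}$ together span an interval of length $\le (K-1)N_{i-1}$, so the total span of $B$ restricted to one parent is $< (K-1)N_{i-1} = 2(K^2-K)N_i$, forcing $\le 1 + \frac{2(K^2-K)N_i}{N_i/2} = 4K^2-4K+1$ points per parent; and since crossing to the next parent costs a gap $\ge$ the full parent length $\gg N_i$, actually $B$ can meet at most... — this is where I must be careful, and I will instead prove the sharper statement by the recursive structure: a $2$-regular set meeting two children of $A_i$ has $X \ge N_i/2$, hence meets at most $\lceil (K-1)N_{i-1}/(N_i/2)\rceil$-many... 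Let me just assert the final bound follows by choosing the block sizes so the count is exactly $4K^2-4K$.

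For property (4), the convex case, I would argue: let $B = \{b_1 < \dots < b_n\} \subseteq A_i$ be convex, so its consecutive differences $d_m = b_{m+1}-b_m$ are monotone. I first handle the possibility that $B$ crosses between two parent intervals $I_{i-1,j'}$: there can be at most one index $m$ at which $d_m$ is "large enough to cross a parent gap but the sequence then comes back" — more precisely, by monotonicity, once a difference exceeds $N_i$ all later differences do too, so at most $\le \frac{(K-1)N_{i-1}}{N_i} \cdot(\text{number of parents}) $... the clean approach: by induction property (4) holds for $A_{i-1}$ with bound $(i-1)(2K^2-2K)$; the elements of $B$ lie in $A_{i-1}$, and all but at most $(i-1)(2K^2-2K)$ of them lie in one parent interval $I_{i-1,j'}$; within that parent, $B$ restricted there is still convex and lies in $A_i \cap I_{i-1,j'}$, which is $K-1$ intervals $I_{i,j}$ separated by deleted blocks; a convex sequence confined to $K-1$ consecutive intervals of length $(K-1)N_i$ with separating gaps — by the same monotone-difference argument as in the proof of Theorem~\ref{ColouringConstruction}, all but at most $2K-2$ of its elements, or so, lie in a single $I_{i,j}$ — I would track the constant so the total is $(i-1)(2K^2-2K) + (2K^2-2K) = i(2K^2-2K)$. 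The main obstacle I anticipate is exactly this constant-chasing in (3) and (4): getting the recursion to close with precisely the stated bounds $4K^2-4K$ and $i(2K^2-2K)$ requires choosing the number of retained/deleted blocks and the interval lengths consistently, and carefully using monotonicity of differences (for convexity) versus the bi-Lipschitz bound $X \le d_m \le 2X$ (for $2$-regularity) to convert "the sequence leaves an interval" into "a gap is $\ge N_i$" and thence into an upper bound on the point count.
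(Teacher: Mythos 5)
Your overall plan (a Cantor-type deletion scheme plus induction on $i$) is the right one, but as written there are two genuine gaps. First, the construction itself is never pinned down in a way that satisfies (1) and (2) simultaneously. Both of your concrete proposals fail: keeping $2K-1$ of $2K$ blocks as one child gives children of length $(2K-1)(K-1)N_i$, not $(K-1)N_i$; and ``retaining $K-1$ blocks and discarding $K+1$'' removes a proportion $\frac{K+1}{2K}>\frac12$ of each parent, so $|A_i|\approx\frac{K-1}{2K}|A_{i-1}|$, which violates $|A_i|\geq(1-\frac1K)|A_{i-1}|$ --- discarding more blocks makes (1) \emph{worse}, not better, so the remark that this ``only improves (1)'' is backwards. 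The construction that works is to cut each parent $I_{i-1,j}$ (length $(K-1)N_{i-1}=2K(K-1)N_i$) into $2K(K-1)$ blocks of length $N_i$ and delete every $K$-th block: this removes exactly a $\frac1K$ proportion (giving (1)), and the surviving blocks form $2(K-1)$ children per parent, each a run of $K-1$ consecutive blocks, i.e.\ intervals of length $(K-1)N_i$ separated by deleted gaps of length $N_i$ (giving (2)). Each parent must contain \emph{many} children; no ``one child per parent'' scheme can meet both (1) and (2).

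Second, your verification of (3) is not closed: you correctly derive that if a $2$-regular set $B$ meets two children then every gap of $B$ is at least $N_i/2$, but you then get stuck on the ``per parent versus overall'' count and end by asserting the bound. The missing step is to apply the induction hypothesis (3) \emph{at stage $i-1$ to the set $B$ itself}: since $A_i\subseteq A_{i-1}$ and $|B|>4K^2-4K$, property (3) for $A_{i-1}$ already confines $B$ to a single parent interval $I_{i-1,j}$, of length $(K-1)N_{i-1}$. There is then no summation over parents at all: one gap $\geq N_i$ plus $2$-regularity forces all gaps $\geq N_i/2$, so $|B|\leq\frac{(K-1)N_{i-1}}{N_i/2}=4K^2-4K$, a contradiction, hence $B$ lies in a single child. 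You mention that $B\subseteq A_{i-1}$ but only use it to bound $B$ within one parent, not to rule out $B$ meeting several parents, which is exactly the unresolved point in your write-up. Your treatment of (4) is essentially the correct recursion (invoke (4) for $A_{i-1}$, then use monotonicity of the differences inside the distinguished parent), but the per-step loss is $\frac{(K-1)N_{i-1}}{N_i}=2K^2-2K$ stray elements, not ``$2K-2$ or so,'' and that constant is what makes the totals telescope to $i(2K^2-2K)$.
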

\begin{proof}
To construct $A_{i}$ from $A_{i-1}$, we simply divide each interval $I_{i-1,j}$ into intervals of length $N_{i}$ and remove from $A_{i-1}$ every $K$'th such interval. To be precise, at stage $i-1$, we have a set $A_{i-1}$ which is a union of intervals $I_{i-1,j}$ of length $(K-1)N_{i-1}$. Let \[I_{i-1,j}=[a_{i-1,j},a_{i-1,j}+(K-1)N_{i-1})\] be such an interval. We divide $I_{i-1,j}$ into intervals of the form
\[[a_{i-1,j}+lN_{i},a_{i-1,j}+(l+1)N_{i}).\]
Then, we remove from $I_{i-1,j}$ (and thus $A_{i-1}$) those intervals with $K$ dividing $l$. Note that $N_{i-1}=2KN_{i}$, and so $I_{i-1,j}$ can be evenly divided into $2(K-1)K$ intervals of length $N_i$.

Now we verify that properties (1), (2), (3) and (4) hold. By construction, at stage $i$ we only remove a proportion of $1/K$ elements from $A_{i-1}$. This shows $|A_{i}|\geq (1-1/K)|A_{i-1}|$ which proves property (1).

The fact that $K-1$ consecutive intervals out of every $K$ intervals in $I_{i-1,j}$ are not removed from $A_{i-1}$ shows that property (2) holds. 

For property (3) we work inductively. Suppose $A'$ is a 2-regular sequence in $A_{i}$ of size greater than $4K^2-4K$. Then $A'$ is necessarily confined to a single interval $I_{i-1,j}$ by property (3) applied at the stage $i-1$. Thus $A'$ is contained in an interval of length $(K-1)N_{i-1}$ by (2). Furthermore, if $A'$ is not confined to a single interval $I_{i,j'}$, then there necessarily is a gap between consecutive elements of length at least $N_{i}$. This is due to the fact that the intervals $I_{i,j'}$ are separated by intervals of length $N_{i}$. It follows that any gap between consecutive elements of $A'$ lies between $\frac12N_{i}$ and $2N_{i}$.
Thus \[|A'|\leq \frac{(K-1)N_{i-1}}{\frac12N_{i}}=4K^2-4K.\] 

Property (4) is proved similarly. Suppose $A'$ is a convex sequence in $A_{i}$ of size $s$. All but at most $(i-1)(2K^2-2K)$ elements lie outside a single $I_{i-1,j}$, so it suffices to show at most $2K^2-2K$ of those elements of $A'$ in $I_{i-1,j}$ lie outside a single $I_{i,j'}$. However, once $A'$ exists $I_{i,j'}$, all subsequent gaps must be at least $N_i$, and are confined to an interval of length $(K-1)N_{i-1}$. So there are at most $2K^2-2K$ such elements.
\end{proof}

\begin{proof}[Proof of Theorem \ref{DensityConstruction}]
Let $k$ be sufficiently large and let $K=2k$. In this way \[N=(2k-1)(4k)^k.\] Let $A=A_k$ be as in Lemma \ref{Cantor}. By Bernoulli's inequality,
\[\lr{1-\frac{1}{K}}^k\geq 1-\frac{k}{K}=\frac{1}{2}\]
so that $A$ has density at least $1/2$.

By property (3) of Lemma \ref{Cantor}, any $2$-regular subset of $A$ has size at most \[4K^2-4K\leq 4K^2=16k^2,\] and any convex subset has size at most
\[K-1+\frac{K}{2}(2K^2-2K)\leq 3K^3=24k^3.\]
Finally, since
\[k\leq \frac{\log N}{\log\log N},\]
our estimates follow.
\end{proof}

\begin{proof}[Proof of Theorem \ref{DifferenceConstruction}]
Let $A_1=\{1\}$ and inductively let 
\[A_{i+1}=(A_i)\cup (16^{i}+A_i).\]
It is a straightforward induction to show that $A_i\subseteq [1,2\cdot 16^{i-1}]$, from which $|A_i|=2^i$ and  $A_i-A_i\subseteq (-2\cdot 16^{i-1},2\cdot 16^{i-1})$ follow. We will prove by induction that $R_2(A_i-A_i)\leq 3$ and $C(A_i-A_i)\leq 2i$. The case $i=1$ is trivial. Now
\[A_{i+1}-A_{i+1}=(-16^i+A_i-A_i)\cup(A_i-A_i)\cup(16^i+A_i-A_i).\]
If a $2$-regular or convex sequence intersects just one of the three sets on the right then we can apply translation invariance and induction. Otherwise, any $2$-regular or convex sequence which intersects two of the three sets on the right contains a gap of length at least $16^i/2$. In the case of a regular sequence, the minimum gap must then be at least $16^i/4$, and so the sequence has at most three terms (one from each set). In the case of a convex sequence, there can only be multiple elements in one of the three sets. So by induction we have $C(A_{i+1}-A_{i+1})\leq C(A_i-A_i)+2\leq 2(i+1)$.
\end{proof}

\end{document}